\documentclass[12pt]{article}
\usepackage{}
\usepackage{amsfonts}
\usepackage{amssymb}
\usepackage{mathrsfs}
\usepackage{srcltx}
\textwidth 155mm \textheight 225mm \oddsidemargin 15pt
\evensidemargin 0pt \topmargin 0cm \headsep 0.3cm

\usepackage{amsmath}
\usepackage{amsthm}
\usepackage{amstext}
\usepackage{amsopn}
\usepackage{graphicx}
\usepackage{bm}
\newtheorem{theorem}{Theorem}[section]
\newtheorem{lemma}[theorem]{Lemma}
\newtheorem{proposition}[theorem]{Proposition}

\theoremstyle{definition}

\theoremstyle{remark}

\numberwithin{equation}{section}

\newcommand{\ba}{\begin{array}}
\newcommand{\ea}{\end{array}}
\newcommand{\f}{\frac}

\newcommand{\ds}{\displaystyle}

\begin{document}
\date{}
\title{ \bf\large{The stability and Hopf bifurcation of the diffusive Nicholson's blowflies model in spatially heterogeneous environment}\thanks{This research is supported by the National Natural Science Foundation of China (No 11771109).}}
\author{Dan Huang\textsuperscript{1},\ \ Shanshan Chen\textsuperscript{2}\footnote{Corresponding Author, Email: chenss@hit.edu.cn}\ \
 \\
{\small \textsuperscript{1} School of Mathematics, Harbin Institute of Technology,\hfill{\ }}\\
\ \ {\small  Harbin, Heilongjiang, 150001, P.R.China.\hfill{\ }}\\
{\small \textsuperscript{2} Department of Mathematics, Harbin Institute of Technology,\hfill{\ }}\\
\ \ {\small Weihai, Shandong, 264209, P.R.China.\hfill{\ }}\\
}
\maketitle

\begin{abstract}
In this paper, we consider the diffusive Nicholson's blowflies model in spatially heterogeneous environment when the diffusion rate is large. We show that the ratio of the average
of the maximum per capita egg production rate to that of the death rate affects the dynamics of the model. The unique positive steady state is locally asymptotically stable if the ratio is less than a critical value. However, when the ratio is greater than the critical value, large time delay can make the unique positive steady state unstable through Hopf bifurcation. Especially, the first Hopf bifurcation value tends to that of the \lq\lq average\rq\rq~DDE model when the diffusion rate tends to infinity. Moreover, we show that the direction of the Hopf bifurcation is forward, and the bifurcating periodic
solution from the first Hopf bifurcation value is orbitally asymptotically stable, which improves the earlier result by Wei and Li (Nonlinear. Anal., 60: 1351--1367, 2005).

\noindent {\bf{Keywords}}: Hopf bifurcation;  delay; diffusion; heterogeneous environment.\\
\noindent {\bf {MSC 2010}}: 35R10, 37G15, 37N25, 92D25
\end{abstract}

\section{Introduction}

To explain the oscillatory behavior of blowfly observed by Nicholson, Gurney et al. \cite{Gurney1980Nicholson} proposed the following classical Nicholson's blowflies model,
\begin{equation}\label{ddem}
\frac{{d u(t)}}{{d t}} = pu(t - \hat \tau){e^{ - au(t - \hat \tau )}} - \delta u(t),
\end{equation}
where $u(t)$ represents the size of the adult population in time $t$, $p$ is the maximum per capita egg production rate, $1/a$ is the size at which the population reproduces at its maximum rate, $\delta$ is the per capita daily death rate, and time delay $\hat\tau$ represents the maturation (or generation) time. The global dynamics of model \eqref{ddem} has been investigated extensively, see \cite{FengYan,Trofimchuk,HouDuanHuang,Liu2014,ShiSong,ShuLiWu,SoYu,WeiLi} and references therein.

Considering the spatial environment, one could obtained the following diffusive Nicholson's blowflies model:
\begin{equation}\label{difNi}
\begin{cases}
\ds\frac{\partial u}{\partial t} = d\Delta u + pu(x,t -\hat \tau ){e^{ - au(x,t - \hat\tau)}} - \delta u(x,t),& x \in \Omega,\;t > 0,\\
\displaystyle  {\partial _n}u=0,&x\in\partial\Omega,\;t>0,
\end{cases}
\end{equation}
where $\Omega$ is a bounded domain with a smooth boundary $\partial \Omega$, $n$ is the outward unit normal vector on $\partial \Omega$, and $\Delta$ is the Laplacian operator which models the passive movement of the species in space. It was proved in \cite{YangSo1996} that all non-trivial solutions
of \eqref{difNi} converge to the unique positive steady state for $1<p/\delta<e$, and
when $p/\delta>e^2$, the large time delay $\hat\tau$ could make the unique positive steady state unstable through Hopf bifurcation.
Yi and Zou \cite{YiZou2008} showed that the unique positive steady state of model \eqref{difNi} is also globally attractive for the non-monotone case that $e<p/\delta\le e^2$.
Gourley and Ruan \cite{GourleyRuan2000} considered the global dynamics of model \eqref{difNi} with the distributed delay.
Model \eqref{difNi} with the homogeneous Dirichlet boundary condition was also studied extensively, see \cite{GuoMa2016,SoYang1998,SuShiWei2010} for the global dynamics and
Hopf bifurcation. We point out that the results on the travelling wave solution were obtained in \cite{LiRuanWang2007,LinLin2014,MeiSo2004,SoZou2001,ZhangPeng2008} and references therein when $\Omega$ is unbounded.

In model \eqref{difNi}, all the parameters are constant. Due to the heterogeneity of the environment, the blowflies may have different egg production rates or death rates at difference spaces.
Then we consider the following spatially heterogeneous model:
\begin{equation}\label{M1}
\left\{ \begin{array}{ll}
\displaystyle\frac{\partial u}{\partial t} = d\Delta u+ p(x)u(x,t - \hat\tau ){e^{ - au(x,t -\hat \tau )}} - \delta(x) u(x,t),& x \in \Omega,\;\;t > 0,\\
\displaystyle  {\partial _n}u = 0,& x \in \partial \Omega,\;\;t > 0.
\end{array} \right.
\end{equation}
To avoid unnecessary complications, here we only assume that the maximum per capita egg production rate $p$ and
the per capita death rate $\delta$ are positive and spatially dependent, and all the other parameters are positive constants.
If $d=0$, model \eqref{M1} could be regarded as a system of DDE, and clearly for every $x\in\Omega$, the positive equilibrium $\ds\frac{1}{a}\ln \ds\frac{p(x)}{
\delta(x)}$ is globally attractive if
$1<p(x)/\delta(x)\le e^2$, and
large delay $\hat\tau$ could induce Hopf bifurcation if $p(x)/\delta(x)>e^2$.
A natural question is whether large delay could induce Hopf bifurcation for
 model \eqref{M1} when $d\ne0$, and in this paper, we will consider this problem when the diffusion rate $d$ is large.

The main method in this paper are motivated by \cite{Busenberg1996Stability}, where Busenberg and Huang showed the existence of the Hopf bifurcation near the spatially nonhomogeneous steady state. Since then, there exist extensive results on the Hopf bifurcation near such type of spatially nonhomogeneous
steady state, see \cite{Chen2012Stability,Chen2016Stability,Guo2015,Guo2017,GuoYan2016,HuYuan2011,SuWeiShi2009,SuWeiShi2012,YanLi2010} and references therein.
Moreover, this method could also be applied to the delayed logistic population model in spatially heterogeneous environment \cite{Chen2018Hopf,ChenWeiZhang,ShiShiSong2019}, and large time could induce Hopf bifurcation. For the Nicholson's blowflies model, delay cannot always induce Hopf bifurcation, and we need to modify some arguments in \cite{Busenberg1996Stability,ChenWeiZhang}.

Letting $\tilde u = au, \tilde t = d t $ and $f(\tilde u) = \tilde u{e^{ -\tilde u}}$, denoting $ r =1/d$, $\tau  = d \hat\tau$, and dropping the tilde sign, system \eqref{M1} can be transformed as follows:
\begin{equation}\label{M2}
\left\{ \begin{array}{ll}
\displaystyle\frac{{\partial u}}{{\partial t}} = \Delta u+ r p(x) f(u(x,t - \tau )) - r \delta(x) u,& x \in \Omega ,\;\;t > 0,\\
\displaystyle  {\partial _n}u = 0,& x \in \partial \Omega ,\;\;t > 0.
\end{array} \right.
\end{equation}
It follows from \cite[Theorem 2.5, and Propositions 3.2 and 3.3]{CantrellCosner} that
\begin{proposition}\label{pp1}
Assume that $c_0>0$ and $r>0$, where
\begin{equation}\label{c0}
c_0=\ln \frac{\bar p}{\bar \delta },\;\;\bar \delta  =\ds\frac{\int_{\Omega}\delta(x)dx}{|\Omega|}\;\;\text{and}\;\;\bar p  =\ds\frac{\int_{\Omega}p(x)dx}{|\Omega|}.
\end{equation}
Then model \eqref{M2} admits a unique positive steady state $u_r$, which is globally asymptotically stable for $\tau=0$.
\end{proposition}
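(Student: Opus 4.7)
The plan is to recognize that for $\tau=0$, system \eqref{M2} is a scalar reaction-diffusion equation with Neumann boundary conditions, so it falls squarely into the general logistic framework of Cantrell and Cosner. I will verify the three structural hypotheses needed to invoke their results: (i) the reaction term $h(x,u) := rp(x)f(u)-r\delta(x)u$ satisfies $h(x,0)=0$ and is smooth, (ii) the zero steady state is linearly unstable, (iii) the per-capita growth rate $h(x,u)/u = rp(x)e^{-u}-r\delta(x)$ is \emph{strictly decreasing} in $u$ on $(0,\infty)$, and (iv) there is a uniform a priori upper bound (supersolution).

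For (ii), let $\lambda_{*}(r)$ denote the principal eigenvalue of $-\Delta-r(p(x)-\delta(x))$ with Neumann boundary condition. Testing its Rayleigh quotient against the constant function $\phi\equiv 1$ yields
\begin{equation*}
\lambda_{*}(r) \;\leq\; \frac{-r\int_{\Omega}(p(x)-\delta(x))\,dx}{\int_{\Omega} dx} \;=\; -r(\bar p - \bar\delta).
\end{equation*}
Since $c_0=\ln(\bar p/\bar\delta)>0$ forces $\bar p>\bar\delta$, we get $\lambda_{*}(r)<0$, so $0$ is unstable and $\varepsilon\phi_{*}$ is a strict subsolution for all sufficiently small $\varepsilon>0$, where $\phi_{*}>0$ is the principal eigenfunction. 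For (iv), any constant $M\geq \max_{\bar\Omega}\ln(p(x)/\delta(x))$ is a supersolution of the stationary problem, since then $p(x)e^{-M}\leq\delta(x)$ pointwise, giving $h(x,M)\leq 0$. Combining sub- and supersolutions in the standard monotone iteration gives the existence of at least one positive steady state $u_r$.

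Uniqueness then follows from the strict monotonicity in (iii): if $u_1\not\equiv u_2$ were two positive steady states, we may assume $u_1\leq u_2$ (by taking the minimal/maximal steady states obtained from iteration starting at $\varepsilon\phi_*$ and $M$), multiply the equation for $u_1$ by $u_2$, the equation for $u_2$ by $u_1$, subtract and integrate to obtain
\begin{equation*}
0 \;=\; r\int_{\Omega} p(x)u_1 u_2 \bigl(e^{-u_1}-e^{-u_2}\bigr)\,dx,
\end{equation*}
which forces $u_1\equiv u_2$ since $e^{-u}$ is strictly decreasing. Finally, for $\tau=0$ the scalar parabolic equation generates a strongly order-preserving semiflow by the strong maximum principle. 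For any nontrivial nonnegative initial datum, some later-time snapshot $u(\cdot,t_0)$ is strictly positive on $\bar\Omega$, hence can be squeezed between $\varepsilon\phi_{*}$ and a large constant $M$; monotone convergence of the solutions launched from these sub- and supersolutions to the \emph{unique} positive steady state $u_r$ sandwiches $u(\cdot,t)$ and yields $u(\cdot,t)\to u_r$ uniformly.

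The main obstacle is uniqueness: in general for non-monotone nonlinearities (and $f(u)=ue^{-u}$ is non-monotone on $[0,\infty)$) one has to be careful, but the crucial structural fact that saves us is that it is $h(x,u)/u$, and \emph{not} $h(x,u)$ itself, that needs to be decreasing. This is exactly the logistic sublinearity hypothesis of Cantrell--Cosner, and once it is verified the uniqueness/stability machinery goes through with no further modification.
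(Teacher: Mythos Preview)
Your proof is correct and is essentially the same approach as the paper's: the paper simply invokes \cite[Theorem~2.5 and Propositions~3.2--3.3]{CantrellCosner} without further detail, and what you have written is precisely the verification of the logistic-type hypotheses (sublinearity of the per-capita rate $u\mapsto p(x)e^{-u}-\delta(x)$, instability of $0$ via the Rayleigh quotient under $\bar p>\bar\delta$, existence of a constant supersolution, and the monotone sandwich for global convergence) that those results require. No genuine difference in method.
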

By the similar arguments as in \cite{Cantrell1996,ChenWeiZhang}, we have the asymptotic profile of $u_r$.
\begin{proposition}\label{pp2}
Assume that $c_0>0$. Then $\mathop {\lim {u_r}}\limits_{r \to 0}  = {c_0}$, and $u_r$ is continuously differentiable for $r\in[0,\infty)$ if  $u_0\equiv c_0$.
\end{proposition}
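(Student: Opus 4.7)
The plan is to apply the implicit function theorem (IFT), but the naive formulation fails at $r=0$: the linearization of the steady state equation degenerates to the Neumann Laplacian, whose kernel consists of the constants. I remove this degeneracy by splitting $u=c+r\phi$, where $c\in\R$ absorbs the kernel and $\phi$ lives in the mean-zero subspace. This parametrizes steady states for $r\ge 0$ in a way that extends smoothly across $r=0$.

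Concretely, the positive steady state of \eqref{M2} solves
\[
\Delta u+r[p(x)f(u)-\delta(x)u]=0,\qquad \partial_n u|_{\partial\Om}=0,
\]
with $f(u)=ue^{-u}$. Setting $u=c+r\phi$ with $\phi\in X:=\{\phi\in W^{2,q}(\Om):\partial_n\phi=0\text{ on }\partial\Om,\ \int_\Om\phi\,dx=0\}$ for some $q>N$, and dividing through by $r$, I obtain the equivalent equation
\[
F(r,c,\phi):=\Delta\phi+p(x)f(c+r\phi)-\delta(x)(c+r\phi)=0,
\]
which extends smoothly to $r=0$. At $r=0$, integrating $F(0,c,\phi)=0$ over $\Om$ gives $\bar p\,f(c)=\bar\delta\, c$, whose unique positive root is $c=c_0$; the remainder then determines a unique $\phi_0\in X$ via $\Delta\phi_0=c_0\delta(x)-c_0e^{-c_0}p(x)$, whose right-hand side has zero mean by the definition of $c_0$.

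The crux is showing that the partial derivative $L:=D_{(c,\phi)}F(0,c_0,\phi_0)$, namely $L(h,\psi)=h[p(x)f'(c_0)-\delta(x)]+\Delta\psi$, is an isomorphism from $\R\times X$ onto $L^q(\Om)$. Taking means reduces this to the scalar non-degeneracy condition $\bar p\,f'(c_0)\ne\bar\delta$, which I verify using $f'(u)=(1-u)e^{-u}$ together with $\bar p\,e^{-c_0}=\bar\delta$: the difference equals $-c_0\bar\delta\ne 0$. Granted this, surjectivity follows by first choosing $h$ to cancel the mean of the target and then solving a Neumann problem for $\psi$, and injectivity is the same calculation run backwards. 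The IFT then yields a $C^1$ curve $r\mapsto (c(r),\phi(r))$ through $(c_0,\phi_0)$, and by the uniqueness statement in Proposition \ref{pp1}, $u_r=c(r)+r\phi(r)$ for small $r\ge 0$; this simultaneously establishes $\lim_{r\to 0}u_r=c_0$ and $C^1$ dependence at $r=0$. Differentiability at any $r_0>0$ is then a standard IFT on $\{h\in W^{2,q}(\Om):\partial_n h=0\}$: the Fr\'echet derivative $\Delta+r_0[p(x)f'(u_{r_0})-\delta(x)]$ is invertible because the asymptotic stability of $u_{r_0}$ at $\tau=0$ asserted in Proposition \ref{pp1} excludes $0$ from the spectrum of this elliptic operator. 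The main obstacle is the rank-one degeneracy at $r=0$, which is precisely what the $(c,\phi)$-splitting is designed to overcome.
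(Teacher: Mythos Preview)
Your argument is correct and is precisely the standard Lyapunov--Schmidt/implicit function theorem approach that the references \cite{Cantrell1996,ChenWeiZhang} use; the paper itself does not spell out a proof but defers to these sources. Your splitting $u=c+r\phi$ with $\phi$ mean-zero, the identification of $(c_0,\phi_0)$ at $r=0$, the verification that $\bar p\,f'(c_0)-\bar\delta=-c_0\bar\delta\ne 0$ yields invertibility of $D_{(c,\phi)}F(0,c_0,\phi_0)$, and the appeal to the principal-eigenvalue/stability argument for $r_0>0$ are all exactly what is needed, so there is nothing to add.
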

Our main results are summarized as follows (see Theorems \ref{distribution of eigenvalues} and \ref{stabliHopf}): for $r \in(0, r_1]$, where $0<r_1 \ll 1$,
 \begin{enumerate}
 \item [$(i)$] if $0<c_0<2$, then the unique positive steady state $u_r$ of model \eqref{M2} is locally asymptotically stable for any $\tau\ge0$;
 \item [$(ii)$] if $c_0>2$,  there exists a sequence $\{\tau_n\}_{n=0}^\infty$ such that
$u_r$ is locally asymptotically stable for any $\tau\in[0,\tau_0)$ and unstable for $\tau>\tau_0$, and model \eqref{M2} occurs Hopf bifurcation at $u_r$ when $\tau=\tau_n$ ($n=0,1,\dots$). Moveover, for each $n \in \mathbb{N} \cup\{0\}$, the direction of the Hopf bifurcation at $\tau=\tau_{n}$ is forward, that is, the bifurcating periodic solutions exist for $ \tau>\tau_n$, and the bifurcating periodic solution from $\tau=\tau_{0}$ is orbitally asymptotically stable.
\end{enumerate}
Therefore, we have the associated results for the equivalent model \eqref{M1}, when diffusion rate $d$ is large, see Proposition \ref{equivthm37}.

Throughout the paper, as in \cite{ChenWeiZhang}, we denote the spaces $X=\{u\in H^2(\Omega) :{\partial _n}u = 0\}, Y=L^2(\Omega), C=C([-\tau,0], Y)$, and $\mathcal {C}=C([-1,0], Y)$. It is well known that
\begin{equation}\label{X1}
X=\mathscr{N}(\Delta)\oplus X_1,\; Y=\mathscr{N}(\Delta)\oplus Y_1,
\end{equation}
where
\begin{equation}\label{X1Y1}
\begin{split}
\displaystyle \mathscr{N}(\Delta) =& span\{ \phi \}  = span\{ 1\} ,\; X_1 = \{ y \in X:\int_0^{ L} y(x) dx = 0\}, \\
\displaystyle {Y_1} =&\mathscr{R}(\Delta) = \{ y \in Y:\int_0^{ L} {y(x)} dx = 0\}.
\end{split}
\end{equation}
For any subspace $Z$ of $X, Y , C$ or $\mathcal {C}$, let the complexification of $Z$ be $Z_\mathbb{C}:= Z \oplus {\rm i}Z = \{x_1+{\rm i}x_2 | x_1, x_2 \in Z\}$. Define the domain of a linear operator $T$ by $\mathscr{D}(T)$, the kernel of $T$ by $\mathscr{N}(T)$, and the range of $T$ by $\mathscr{R}(T)$.
For the Hilbert space $Y_{\mathbb{C}}$, we choose the standard inner product $\langle u,v\rangle =\int\limits_\Omega  {\bar u(x)v(x)} dx$.

The rest of the paper is organized as follows. In Section 2, we study the stability/instability of the unique positive steady state of Eq. \eqref{M2} and the associated Hopf bifurcation. In Section 3, we analyze the direction of the Hopf bifurcation and the stability of and the bifurcating periodic
solutions. In Section 4, we give the associated results for the equivalent model \eqref{M1}, and some numerical simulations are given to illustrate our theoretical results.

\section{Stability and Hopf bifurcation}

In this section, we consider the stability of the positive steady state $u_r$ and the associated Hopf bifurcation.
Linearizing model \eqref{M2} at $u_r$, we have
\begin{equation}\label{Linearized system}
\begin{cases}
\displaystyle \frac{\partial v}{\partial t}=\Delta v+r p(x) f^{\prime}\left(u_{r}\right) v(x, t-\tau)- r \delta(x) v, & x \in \partial \Omega,\;\; t>0, \\
\partial _nv = 0, & x \in \partial \Omega,\;\; t>0.
\end{cases}
\end{equation}
It follows from \cite[Chapter 3]{Wu1996Theory} that the infinitesimal generator $A_{\tau} (r)$ of the solution semigroup of Eq. \eqref{Linearized system} satisfies
\begin{equation}\label{Ataur}
A_{\tau}(r) \Psi=\dot{\Psi},
\end{equation}
where
\begin{equation*}
\begin{split}
 &\mathscr{D}\left(A_{\tau}(r)\right)=\big\{\Psi \in C_{\mathbb{C}} \cap C_{\mathbb{C}}^{1}: \Psi(0) \in X_{\mathbb{C}}, \dot{\Psi}(0)=\Delta \Psi(0)\\
 &~~~~~~~~~~~~~~~~~~+r p(x) f'\left(u_{r}\right)\Psi(-\tau)- r \delta(x) \Psi(0)\big\},
\end{split}
\end{equation*}
and $C_{\mathbb{C}}^{1}=C^{1}\left([-\tau, 0], Y_{\mathbb{C}}\right)$. Therefore, $\mu \in \mathbb{C}$ is an eigenvalue of $A_{\tau}(r),$ if and only if there exists $\psi(\neq 0) \in X_{\mathbb{C}}$ such that $\Delta(r, \mu, \tau) \psi=0,$ where
\begin{equation}\label{Deltamu}
\Delta(r, \mu, \tau) \psi:=\Delta \psi+r e^{-\mu \tau} p(x) f^{\prime}\left(u_{r}\right) \psi- r \delta(x) \psi-\mu \psi.
\end{equation}

Firstly, we give the following estimates for solutions of Eq. \eqref{Deltamu}.
\begin{lemma}\label{bounded}
Assume that $\left(\mu_{r}, \tau_{r}, \psi_{r}\right)$ solves Eq. \eqref{Deltamu} with $\mathcal{R}e \mu_{r}, \tau_{r} \ge 0$ and $\psi_r(\neq 0) \in X_{\mathbb{C}}$, then $\left |\displaystyle\frac{\mu_{r}}{r} \right |$ is bounded for $r \in (0,r_1]$.
\end{lemma}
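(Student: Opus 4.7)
The plan is to take the inner product of the eigenvalue equation $\Delta(r,\mu_r,\tau_r)\psi_r=0$ with $\psi_r$ (normalized so $\|\psi_r\|_{Y_\mathbb{C}}=1$) and split into real and imaginary parts. Writing out the equation and integrating by parts using the Neumann boundary condition,
\begin{equation*}
\langle \psi_r,\Delta\psi_r\rangle = -\int_\Omega |\nabla\psi_r|^2\,dx,
\end{equation*}
and setting $A_r=\int_\Omega p(x)f'(u_r)|\psi_r|^2\,dx$, $B_r=\int_\Omega \delta(x)|\psi_r|^2\,dx$ (both real since $p,\delta,f'(u_r)$ are real-valued), I obtain
\begin{equation*}
-\|\nabla\psi_r\|^2 + re^{-\mu_r\tau_r}A_r - rB_r - \mu_r = 0.
\end{equation*}

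Separating real and imaginary parts, writing $\mu_r=\alpha_r+{\rm i}\beta_r$, gives
\begin{equation*}
\alpha_r = -\|\nabla\psi_r\|^2 + re^{-\alpha_r\tau_r}\cos(\beta_r\tau_r)A_r - rB_r,
\end{equation*}
\begin{equation*}
\beta_r = -re^{-\alpha_r\tau_r}\sin(\beta_r\tau_r)A_r.
\end{equation*}
The key observation is that since $\alpha_r\ge 0$ and $\tau_r\ge 0$, we have $e^{-\alpha_r\tau_r}\le 1$, which kills the potentially dangerous exponential factor. Next, by Proposition \ref{pp2}, $u_r\to c_0$ as $r\to 0$, so $u_r$ is uniformly bounded on $(0,r_1]$; together with $p,\delta\in L^\infty(\Omega)$ and the fact that $f'(s)=(1-s)e^{-s}$ is continuous, we get a constant $M>0$ independent of $r$ such that $|A_r|\le M$ and $0\le B_r\le M$ (using $\|\psi_r\|=1$).

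Hence the imaginary-part identity immediately yields $|\beta_r|\le rM$, so $|\beta_r|/r$ is bounded. For the real part, dropping the nonpositive terms $-\|\nabla\psi_r\|^2$ and $-rB_r$ gives
\begin{equation*}
0\le \alpha_r \le re^{-\alpha_r\tau_r}|\cos(\beta_r\tau_r)||A_r|\le rM,
\end{equation*}
so $\alpha_r/r$ is bounded as well. Combining these two bounds shows $|\mu_r/r|\le \sqrt{2}M$ uniformly for $r\in(0,r_1]$, proving the lemma.

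The only mild subtlety is justifying the $L^\infty$ bound on $u_r$ for $r$ small so that $f'(u_r)$ can be controlled; this follows from Proposition \ref{pp2}, and no further obstacle is expected since all other steps are standard Rayleigh-quotient type estimates exploiting the sign of $\alpha_r\tau_r$.
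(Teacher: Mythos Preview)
Your proof is correct and follows essentially the same approach as the paper: both take the $Y_{\mathbb{C}}$-inner product of the eigenvalue equation with $\psi_r$, use $\langle\psi_r,\Delta\psi_r\rangle\le 0$, split into real and imaginary parts, and exploit $|e^{-\mu_r\tau_r}|\le 1$ (from $\mathcal{R}e\,\mu_r,\tau_r\ge 0$) together with the uniform boundedness of $f'(u_r)$ coming from the continuity of $r\mapsto u_r$. Your write-up is in fact slightly more explicit in isolating the factor $e^{-\alpha_r\tau_r}$ and in justifying the $L^\infty$ bound on $u_r$ via Proposition~\ref{pp2}.
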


\begin{proof}
Multiplying $\Delta(r, \mu_r, \tau_r) \psi_r=0$ by $ \overline {\psi}_r,$ and integrating the result over $\Omega$, yields
$$\left \langle \psi_r,\Delta \psi_r\right\rangle + r \int_{\Omega} p(x) f^{\prime}\left(u_{r}\right) |\psi_r|^2 dx e^{-\mu_{r} \tau_{r}} -r  \int_{\Omega}\delta(x)|\psi_r|^2 dx -\mu_{r} \int_{\Omega}|\psi_r|^2 dx =0.$$
Without loss of generality, we assume that $\|\psi_r\|_{Y_\mathbb{C}}^{2} =1$. Noticing that
$$
\left\langle\psi_r, \Delta \psi_r\right\rangle=-\int_{\Omega} \left|\nabla \psi_r\right|^{2} d x \le 0,
$$
we obtain that
$$\left \langle \psi_r,\Delta \psi_r\right\rangle =- r \int_{\Omega} p(x) f^{\prime}\left(u_{r}\right) |\psi_r|^2 dx e^{-\mu_{r} \tau_{r}} +r  \int_{\Omega}\delta(x)|\psi_r|^2 dx +\mu_{r} \le 0.$$
Therefore,
$$
\begin{aligned}
0  \leq  \mathcal{R}e\left(\frac{\mu_r}{r}\right) & \leq \mathcal{R}e \left[ \int_{\Omega} p(x) f^{\prime}\left(u_{r}\right) |\psi_r|^2 dx e^{-\mu_{r} \tau_{r}} - \int_{\Omega}\delta(x)|\psi_r|^2 dx  \right] \\
& \leq \max _{\Omega} p(x) \left\|f^{\prime}\left(u_{r}\right) \right\|_{\infty}. 
\end{aligned}
$$
Similarly, for imaginary parts, we have
$$
\left|\mathcal{I} m\left(\frac{\mu_r}{r}\right)\right|\leq \max _{\Omega} p(x) \left\|f^{\prime}\left(u_{r}\right) \right\|_{\infty}.
$$
It follows from the continuity of mapping $r \mapsto u_{r}$ that $\left |\displaystyle\frac{\mu_{r}}{r} \right |$ is bounded for $r \in (0,r_1]$.
\end{proof}
The following result is similar to \cite[Lemma 2.3]{Busenberg1996Stability}, and we omit the proof.
\begin{lemma}\label{r2}
If $z \in X_{\mathbb{C}}$ and $\langle\phi, z\rangle= 0,$ then $|\langle \Delta z, z\rangle| \geq r_{2}\|z\|_{Y_\mathbb{C}}^{2},$ where $r_{2}$ is the second eigenvalue of operator $-\Delta$.

\end{lemma}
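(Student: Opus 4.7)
The plan is to exploit the spectral decomposition of the Neumann Laplacian. Let $\{\phi_k\}_{k=1}^\infty \subset X$ be an $L^2(\Omega)$-orthonormal basis of eigenfunctions of $-\Delta$ subject to the Neumann boundary condition, ordered so that $0 = r_1 < r_2 \leq r_3 \leq \cdots$, with $\phi_1$ the normalized constant function proportional to $\phi = 1$. For any $z \in X_{\mathbb{C}}$ I would expand $z = \sum_{k=1}^\infty c_k \phi_k$ with $c_k \in \mathbb{C}$; since $z \in H^2$, the formal series $\Delta z = -\sum_{k=1}^\infty r_k c_k \phi_k$ also converges in $Y_{\mathbb{C}}$ by self-adjointness of $-\Delta$ on $X$.

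The hypothesis $\langle \phi, z\rangle = 0$ forces $c_1 = 0$, so both expansions in fact start at $k = 2$. A direct application of Parseval's identity then yields
\begin{equation*}
\|z\|_{Y_{\mathbb{C}}}^2 = \sum_{k\geq 2} |c_k|^2, \qquad \langle \Delta z, z\rangle = -\sum_{k\geq 2} r_k |c_k|^2 \leq 0.
\end{equation*}
Taking absolute values and bounding $r_k \geq r_2$ for every $k \geq 2$, I obtain
\begin{equation*}
|\langle \Delta z, z\rangle| \;=\; \sum_{k\geq 2} r_k |c_k|^2 \;\geq\; r_2 \sum_{k\geq 2} |c_k|^2 \;=\; r_2 \|z\|_{Y_{\mathbb{C}}}^2,
\end{equation*}
which is exactly the desired estimate.

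There is essentially no hard step here: the argument is the textbook Rayleigh-quotient characterization of the second Neumann eigenvalue $r_2$ as the minimum of $-\langle \Delta z, z\rangle / \|z\|_{Y_\mathbb{C}}^2$ over nonzero vectors orthogonal to the first eigenspace. The only two points worth checking are that (i) zero is a simple eigenvalue of $-\Delta$ on $X$ with eigenspace $\mathscr{N}(\Delta) = \mathrm{span}\{1\}$, so orthogonality to $\phi$ is equivalent to orthogonality to the entire kernel (hence $c_1 = 0$ really does kill the whole $r_1 = 0$ contribution); and (ii) the space $X$ already encodes the Neumann boundary condition, so $-\Delta$ is self-adjoint on $X$ with discrete spectrum $\{r_k\}_{k=1}^\infty$, legitimizing the termwise spectral computation used above. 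Both facts are classical for bounded smooth $\Omega$, so no genuine obstacle arises.
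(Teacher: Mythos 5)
Your argument is correct and is precisely the standard variational (Rayleigh-quotient) characterization of the second Neumann eigenvalue that the paper invokes implicitly by citing Lemma 2.3 of Busenberg and Huang and omitting the proof. The two points you flag — simplicity of the zero eigenvalue (using connectedness of $\Omega$) and self-adjointness of $-\Delta$ on $X$ — are exactly the facts needed, so nothing is missing.
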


By virtue of the similar arguments as in \cite[Theorem 3.3]{Chen2016Stability}, we see from Lemmas \ref{bounded} and \ref{r2} that:
\begin{theorem}\label{sstability}
Assume that $0<c_0<2$. Then there exists $r_1>1$ such that $\sigma\left(A_{\tau}(r)\right) \subset\{x+ {\rm i} y: x, y \in \mathbb{R}, x<0\}$ for $ r \in (0,r_1].$
\end{theorem}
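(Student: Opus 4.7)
The plan is to argue by contradiction, following the template behind Lemmas \ref{bounded} and \ref{r2} and the limiting-eigenvalue technique of Busenberg--Huang. Suppose the conclusion fails; then there exist sequences $r_n \to 0^+$, $\tau_n \geq 0$, $\mu_n \in \mathbb{C}$ with $\mathcal{R}e\,\mu_n \geq 0$, and $\psi_n \in X_{\mathbb{C}}$ with $\|\psi_n\|_{Y_{\mathbb{C}}} = 1$ and $\Delta(r_n,\mu_n,\tau_n)\psi_n = 0$. By Lemma \ref{bounded}, $|\mu_n/r_n|$ is bounded, so on a subsequence $\mu_n/r_n \to \nu$ with $\mathcal{R}e\,\nu \geq 0$; in particular $\mu_n \to 0$. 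Since $|e^{-\mu_n\tau_n}| = e^{-\mathcal{R}e(\mu_n)\tau_n}\leq 1$, along a further subsequence $e^{-\mu_n\tau_n} \to \gamma$ with $|\gamma|\leq 1$.

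Using the decomposition \eqref{X1} I would write $\psi_n = \beta_n\phi + z_n$ with $\beta_n \in \mathbb{C}$ and $z_n \in X_{1,\mathbb{C}}$. Testing $\Delta(r_n,\mu_n,\tau_n)\psi_n=0$ against $\bar z_n$ and using that $\Delta\psi_n = \Delta z_n$, together with Lemma \ref{r2}, the $L^\infty$-bounds on $p$, $\delta$, $f'(u_{r_n})$, and the bound $|\mu_n|/r_n = O(1)$, I expect to obtain
\begin{equation*}
r_2\|z_n\|^2_{Y_{\mathbb{C}}}\ \leq\ \int_{\Omega}|\nabla z_n|^2\,dx\ =\ O(r_n).
\end{equation*}
Hence $z_n \to 0$ in $H^1$, and from $|\beta_n|^2|\Omega|+\|z_n\|_{Y_{\mathbb{C}}}^2 = 1$ we may assume $\beta_n \to \beta$ with $|\beta|^2|\Omega|=1$; so $\psi_n\to\beta\phi$ in $Y_{\mathbb{C}}$.

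Next I would extract information from the $\mathscr{N}(\Delta)$-component by integrating $\Delta(r_n,\mu_n,\tau_n)\psi_n=0$ over $\Omega$. The Laplacian term drops out by the Neumann boundary condition, and after dividing by $r_n$,
\begin{equation*}
e^{-\mu_n\tau_n}\!\int_{\Omega}\!p(x)f'(u_{r_n})\psi_n\,dx\;-\;\int_{\Omega}\!\delta(x)\psi_n\,dx\;-\;\frac{\mu_n}{r_n}\,\beta_n|\Omega|\ =\ 0.
\end{equation*}
Passing to the limit via Proposition \ref{pp2} ($u_{r_n}\to c_0$), the convergence $\psi_n\to\beta$ in $Y_{\mathbb{C}}$, and the identity $e^{c_0}=\bar p/\bar\delta$ (so that $f'(c_0)\bar p = (1-c_0)\bar\delta$), and then dividing by $\beta|\Omega|\neq 0$, one obtains
\begin{equation*}
\nu\ =\ \bar\delta\bigl[\,\gamma(1-c_0)-1\,\bigr].
\end{equation*}

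Finally, $|\gamma|\leq 1$ gives $\mathcal{R}e\,\nu \leq \bar\delta(|1-c_0|-1)$, which is strictly negative exactly when $0<c_0<2$, contradicting $\mathcal{R}e\,\nu\geq 0$. Thus the assumed sequences cannot exist, and some $r_1>0$ has the asserted property for every $\tau\geq 0$. The main obstacle is the second step: the $X_1$-projection estimate must be shown to go through \emph{uniformly} in $\tau_n$, which hinges on the bound $|e^{-\mu_n\tau_n}|\leq 1$ absorbing all $\tau_n$-dependence into the $O(r_n)$ remainder. The threshold $c_0=2$ is then dictated precisely by the sharp inequality $|1-c_0|<1$ used at the very end, which also explains why the Nicholson model behaves differently from the delayed logistic model studied in \cite{ChenWeiZhang}.
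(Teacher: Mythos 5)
Your proposal is correct and follows essentially the same route as the paper: a contradiction argument using Lemma \ref{bounded} to bound $\mu_n/r_n$, the decomposition of $\psi_n$ into its constant and $X_1$ components with Lemma \ref{r2} forcing the latter to vanish as $r_n\to 0$, and then integration over $\Omega$ to reduce to the scalar relation whose real part is negative precisely because $|1-c_0|<1$. The only cosmetic differences are that you keep $e^{-\mu_n\tau_n}$ as a single complex number $\gamma$ with $|\gamma|\le 1$ rather than splitting it into modulus $\sigma^*$ and phase $\theta^*$, and you test against $\bar z_n$ to get $\|z_n\|^2=O(r_n)$ instead of the paper's rescaled ansatz $\psi_n=\beta_n c_0+r_n z_n$; both yield the same limit.
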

\begin{proof}
By way of contradiction, there exists a positive sequence $\left\{r_{n}\right\}_{n=1}^{\infty}$ such that $\lim _{n \rightarrow \infty} r_{n}=0,$ and, for $n \geq 1, \Delta\left(r_{n}, \mu, \tau\right) \psi=0$ is solvable for some value of
$\left(\mu_{{n}}, \tau_{{n}}, \psi_{{n}}\right)$ with $\mathcal{R} e \mu_{{n}}, \mathcal{I} m \mu_{{n}} \geq 0, \tau_{{n}} \geq 0$ and $0 \neq \psi_{{n}} \in X_{\mathbb{C}}$. Ignoring a scalar factor, we see from Eq. \eqref{X1} that $\psi_{{n}}$ can be represented as
\begin{equation}\label{psirn}
\begin{split}
&\psi_{{n}}=\beta_{{n}} c_{0}+r_n z_{{n}},  \; z_{{n}} \in\left(X_{1}\right)_{\mathbb{C}}, \; \beta_{{n}} \geq 0, \\
&\|\psi_{{n}}\|_{Y_{\mathbb{C}}}^{2}=\beta_{{n}}^{2} c_{0}^{2}|\Omega|+r_n^{2}\|z_{{n}}\|_{Y_{\mathbb{C}}}^{2}=c_{0}^{2}|\Omega|.
\end{split}
\end{equation}
Substituting \eqref{psirn} and $\mu_{{n}}=r_n h_{{n}}$ into $\Delta(r_n, \mu_{r_{n}}, \tau_{{n}}) \psi_{{n}}=0$ , we obtain that
\begin{equation}
\begin{split}
&H_{1}(z_n, \beta_n, h_n, \tau_n, r_n):=\Delta z_n+\left(e^{-r_n h_n \tau_n} p(x) f'\left(u_{r_n}\right)-\delta(x)-h_n\right)(\beta c_{0}+r_n z_n)=0, \\
&H_{2}(z_n, \beta_n, r_n):=\left(\beta_n^{2}-1\right) c_{0}^{2}|\Omega|+r_n^{2}\|z_n\|_{Y_{\mathbb{C}}}^{2}=0,
\end{split}
\end{equation}
It follows from Lemma \ref{bounded} and Eq. \eqref{psirn} that $ |h_{{n}}|=\left |\ds\frac{\mu_{{n}}}{r_n} \right|$ is bounded and $|\beta_{{n}}| \le 1 $. It follows from Lemma \ref{r2} that there are $M_1, M_2 > 0$ such that
$$
r_{2}\left\|z_{{n}} \right\|_{Y_{\mathbb{C}}}^{2} \leq|\langle \Delta z_{{n}} , z_{{n}} \rangle| \leq M_{1}\left\|z_{{n}} \right\|_{Y_{\mathbb{C}}}+M_{2}r\left\|z_{{n}} \right\|_{Y_{\mathbb{C}}}^{2},
$$
where $r_{2}$ is defined as in Lemma \ref{r2}. Therefore, for sufficiently small $\overline r$, $\left\{z_{{n}}\right\}_{n=1}^{\infty}$ is bounded in $Y_{\mathbb{C}}$ for $r \in\left[0, \overline r\right] .$ Since the operator $\Delta: (X_1)_{\mathbb{C}} \mapsto  (Y_1)_{\mathbb{C}}$ has a bounded inverse, and by applying $\Delta ^{-1}$ on $H_1(z_{{n}}, \beta_{{n}}, h_{{n}}, \tau_{{n}} ,r_n)=0$, we obtain that $\left\{z_{{n}}\right\}_{n=1}^{\infty}$ is also bounded in $\left(X_{1}\right)_{\mathbb{C}}$. Therefore, we see that
$$\left\{\left(z_{{n}}, \beta_{{n}}, h_{{n}}, e^{- r_{n} \tau_{{n}}\left(\mathcal{R}e h_{{n}}\right)}, e^{-{\rm i} r_{n} \tau_{{n}}\left(\mathcal{I}m h_{{n}}\right)}\right)\right\}_{n=1}^{\infty}$$
is precompact in $Y_{\mathbb{C}} \times \mathbb{R}^{3} \times \mathbb{C}.$ Then, there exists a subsequence

$$\left\{\left(z_{{{n}_{k}}}, \beta_{_{{n}_{k}}}, h_{{{n}_{k}}}, e^{- r_{{n}_{k}} \tau_{{{n}_{k}}}\left(\mathcal{R}e h_{{{n}_{k}}}\right)}, e^{-{\rm i} r_{{n}_{k}} \tau_{{{n}_{k}}}\left(\mathcal{I}m h_{{{n}_{k}}}\right)}\right)\right\}_{k=1}^{\infty}$$
convergence to $(z^*,\beta^*,h^*,\sigma^*,e^{-{\rm i}\theta^* })$ as $k \rightarrow \infty $ in the norm of $Y_{\mathbb{C}} \times \mathbb{R}^{3} \times \mathbb{C},$ where
$$
\beta^{*}=1, \;\;z^{*} \in Y_{\mathbb{C}},\;\; h^{*} \in \mathbb{C}\left(\mathcal{R} e h^{*},\mathcal{I} m h^{*} \geq 0\right),\; \;\theta^{*} \in[0,2 \pi) \;\;\text {and}\;\; \sigma^{*} \in[0,1].
$$
Taking the limit of the equation $\Delta ^{-1} H_1 \left(z_{{{n}_{k}}}, \beta_{{{n}_{k}}}, h_{{{n}_{k}}}, \tau_{{{n}_{k}}}, r_{{n}_{k}}\right)=0 $ as $k \rightarrow \infty$, we see that $z^* \in (X_1)_{\mathbb{C}}$ and $(z^*,\beta^*,h^*,\theta^*, \sigma^*)$ satisfies
$$\Delta z^*+\left(\sigma^* e^{-{\rm i}\theta^* } p(x) f'\left(c_0\right)-\delta(x)-h^*\right)c_0=0.$$
Then
\begin{equation}\label{Eqconsin}
\begin{cases}
\sigma^* f^{\prime}(c_0) \int_{\Omega}  p(x) d x \cos \theta^*=\int_{\Omega}\delta(x) d x +(\mathcal{R} e h^{*}) |\Omega|, \\
- \sigma^* f^{\prime}(c_0) \int_{\Omega}  p(x)d x \sin \theta^*=(\mathcal{I}m h^{*})|\Omega|.
\end{cases}
\end{equation}
It follows from the first equation of \eqref{Eqconsin} that
\begin{equation*}
[\sigma^*(1-c_0)]^2\ge1.
\end{equation*}
Since $0<c_0<2$ and $\sigma^*\in[0,1]$, we have $[\sigma^*(1-c_0)]^2<1$, which is a contradiction.

\end{proof}

Then we consider the case of $c_0>2$, and show that large delay will induce Hopf bifurcation.
To show the existence of Hopf bifurcation, we need to verify that the eigenvalues of $A_\tau(r)$ could pass through the imaginary axis as time delay $\tau$ increases. Clearly, $A_\tau(r)$ has a purely imaginary eigenvalue $\mu = {\rm i}\nu (\nu>0)$ for some $\tau \ge 0$, if and only if
\begin{equation}\label{Deltaiv}
\Delta \psi+r e^{-{\rm i} \theta} p(x) f^{\prime}\left(u_{r}\right) \psi- r \delta(x)\psi-{\rm i}\nu \psi=0
\end{equation}
is solvable for some value of $\nu >0, \theta \in [0, 2\pi),$ and $\psi(\neq 0) \in X_{\mathbb{C}}$.
Ignoring a scalar factor, we see from \eqref{X1} that if $(\nu,\theta,\psi)$ solves \eqref{Deltaiv}, then
$\psi\in X_{\mathbb C}$ can be represented as
\begin{equation}\label{psi}
\begin{split}
&\psi=\beta c_{0}+r z,  \; z \in\left(X_{1}\right)_{\mathbb{C}}, \; \beta \geq 0 \\
&\|\psi\|_{Y_{\mathbb{C}}}^{2}=\beta^{2} c_{0}^{2}|\Omega|+r^{2}\|z\|_{Y_{\mathbb{C}}}^{2}=c_{0}^{2}|\Omega|.
\end{split}
\end{equation}
Plugging \eqref{psi} and $\nu=rh$
into Eq. \eqref{Deltaiv}, we obtain that $(\nu,\theta,\psi)$ solves Eq. \eqref{Deltaiv}, where $\nu>0$, $\theta\in[0,2\pi)$ and $\psi\in X_{\mathbb{C}}$,
if and only if the following system:
\begin{equation}\label{g1g2}
\begin{cases}
g_{1}(z, \beta, h, \theta,r):=\Delta z+\left(e^{-{\rm i} \theta} p(x) f'\left(u_{r}\right)-\delta(x)-{\rm i}h\right)(\beta c_{0}+rz)=0 \\
g_{2}(z, \beta, r):=\left(\beta^{2}-1\right) c_{0}^{2}|\Omega|+r^{2}\|z\|_{Y_{\mathbb{C}}}^{2}=0
\end{cases}
\end{equation}
has a solution $(z,\beta,h,\theta)$, where $z\in (X_1)_{\mathbb{C}}$, $\beta\ge0$, $h>0$ and $\theta\in[0,2\pi)$.
Define
$G:(X_1)_{\mathbb C}\times \mathbb{R}^4\to
Y_{\mathbb C}\times \mathbb{R}$ by $G=(g_1,g_2)$.

We first consider the solution of $G(z, \beta, h, \theta, r)=0$ for $r=0$.

\begin{lemma}\label{z0beta0h0theta0}
Assume that $c_0 > 2$. Then the following equation
\begin{equation}
\left\{\begin{array}{l}
\displaystyle {G(z, \beta, h, \theta, 0)=0} \\
\displaystyle {z \in\left(X_{1}\right)_{\mathbb{C}},\; h,\; \beta \geq 0,\; \theta \in[0,2 \pi]}
\end{array}\right.
\end{equation}
has a unique solution $\left(z_0, \beta_0, h_0, \theta_0\right)$, where
\begin{equation}\label{cos0h0}
\begin{array}{c}
\displaystyle \cos \theta_0=\frac{1}{1-c_0},\; \sin \theta_0=- \frac{{\sqrt {c_0^2 - 2{c_0}} }}{{1 - {c_0}}},\\
\displaystyle \beta_0=1,\; h_0= \bar \delta \sqrt{c_0^2 - 2{c_0}},
\end{array}
\end{equation}
and $z_0 \in\left(X_{1}\right)_{\mathbb{C}}$ is the unique solution of
\begin{equation}\label{z0}
\Delta z= -c_0 f^{\prime}(c_0) p(x)e^{-{\rm i} \theta_0} +c_0\delta(x) + {\rm i} h_0 c_0.
\end{equation}
\end{lemma}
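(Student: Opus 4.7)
The plan is to reduce $G(z,\beta,h,\theta,0)=0$ to a finite-dimensional scalar equation by projecting onto $\mathscr{N}(\Delta) = \mathrm{span}\{1\}$, solve that scalar equation explicitly for $(\beta_0, h_0, \theta_0)$, and then recover $z_0$ by inverting $\Delta$ on the complement $(X_1)_{\mathbb C}$. First I would observe that $g_2(z,\beta,0)=0$ forces $\beta^2=1$ immediately, so the sign constraint $\beta\ge 0$ gives $\beta_0=1$ uniquely, independently of $(z,h,\theta)$.

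With $\beta=1$, the equation $g_1(z,1,h,\theta,0)=0$ reads
\begin{equation*}
\Delta z = -c_0\bigl(e^{-{\rm i}\theta}p(x)f'(c_0)-\delta(x)-{\rm i}h\bigr).
\end{equation*}
Because $\int_\Omega \Delta z\,dx=0$ (by the Neumann condition, or equivalently since $\Delta z\in (Y_1)_{\mathbb C}$), taking the spatial mean of both sides produces the scalar compatibility condition
\begin{equation*}
e^{-{\rm i}\theta}f'(c_0)\bar p-\bar\delta-{\rm i}h=0.
\end{equation*}
Here I would exploit $f'(u)=(1-u)e^{-u}$ together with $c_0=\ln(\bar p/\bar\delta)$ to obtain the key identity $f'(c_0)\bar p=(1-c_0)\bar\delta$. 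Splitting the compatibility condition into real and imaginary parts then yields $\cos\theta=1/(1-c_0)$ and $h=(c_0-1)\bar\delta\sin\theta$.

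The hypothesis $c_0>2$ guarantees $|1/(1-c_0)|<1$, so $\cos\theta_0\in(-1,0)$ and the first equation is solvable. Since $c_0-1>0$ and $\bar\delta>0$, the sign constraint $h\ge 0$ together with $\cos^2\theta+\sin^2\theta=1$ selects the unique branch
\begin{equation*}
\sin\theta_0=-\frac{\sqrt{c_0^2-2c_0}}{1-c_0}>0,\qquad h_0=\bar\delta\sqrt{c_0^2-2c_0},
\end{equation*}
in agreement with \eqref{cos0h0}. Substituting $(\beta_0,h_0,\theta_0)$ back into $g_1=0$ produces precisely equation \eqref{z0}; by construction its right-hand side has zero spatial mean and therefore lies in $(Y_1)_{\mathbb C}$. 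Since $\Delta:(X_1)_{\mathbb C}\to (Y_1)_{\mathbb C}$ is a bounded bijection (the spectral gap bound exploited in Lemma \ref{r2} gives the bounded inverse), there is a unique $z_0\in(X_1)_{\mathbb C}$ solving \eqref{z0}.

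The only delicate point I anticipate is pinning down the uniqueness of $\theta_0$: the equation $\cos\theta=1/(1-c_0)$ has two solutions in $[0,2\pi)$ differing by the sign of $\sin\theta$, so one must invoke $h\ge 0$ together with the \emph{strict} inequality $c_0>2$ (which forces $\sin\theta_0\neq 0$, hence $\theta_0\neq 0,\pi$) to single out a single branch. The borderline case $c_0=2$ would collapse to $\sin\theta_0=0$ and $h_0=0$, which is exactly why the strict inequality $c_0>2$ is imposed in the statement.
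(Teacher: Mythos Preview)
Your argument is correct and follows essentially the same route as the paper: solve $g_2=0$ for $\beta_0=1$, project $g_1=0$ onto constants to obtain the two real scalar equations for $(\theta,h)$, then invert $\Delta$ on $(X_1)_{\mathbb C}$ to get $z_0$. Your write-up is in fact slightly more explicit than the paper's, since you record the identity $f'(c_0)\bar p=(1-c_0)\bar\delta$ and use it to derive the formulas in \eqref{cos0h0} directly, whereas the paper just asserts that the system has the stated unique solution ``from \eqref{c0}''; likewise your discussion of why $h\ge 0$ selects a single branch of $\theta_0$ is more careful than the paper's.
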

\begin{proof}
It follows from \eqref{g1g2} that $g_2(z,\beta,0)=0$ if and only if $\beta=\beta_0=1$. Note that
\begin{equation}
g_{1}(z, \beta_0, h, \theta, 0)=\Delta z +c_0 f^{\prime}(c_0) p(x)e^{-{\rm i} \theta} + c_0 \delta(x) + {\rm i} h c_0.
\end{equation}
Then
$$
\left\{\begin{array}{l}
g_{1}\left(z, \beta_0, h, \theta, 0\right)=0 \\
z \in\left(X_{1}\right)_{\mathbb{C}},\;h, r \geq 0,\; \theta \in[0,2 \pi]
\end{array}\right.
$$
is solvable if and only if

\begin{equation}\label{sed}
\left\{\begin{array}{l}
{f^{\prime}(c_0) \int_{\Omega}  p(x) d x \cos \theta=\int_{\Omega}\delta(x) d x} \\
{- f^{\prime}(c_0) \int_{\Omega}  p(x)d x \sin \theta=h |\Omega|}
\end{array}\right.
\end{equation}
is solvable for a pair $(\theta, h)$ with $h \ge 0$ and $\theta \in[0, 2\pi]$.
From \eqref{c0}, we see that \eqref{sed} has a unique solution $(\theta_0, h_0)$, which satisfies
\begin{equation}\label{thetah}
\begin{array}{c}
\displaystyle \cos \theta_0=\frac{1}{1-c_0},\; \sin \theta_0=- \frac{{\sqrt {c_0^2 - 2{c_0}} }}{{1 - {c_0}}},\; h_0= \bar\delta \sqrt{c_0^2 - 2{c_0}}
\end{array}
\end{equation}
when $c_0 > 2$. Therefore, $g_{1}(z, \beta_0, h_0, \theta_0, 0)=0$ has a unique solution $z_0$, which satisfies Eq. \eqref{z0}.

\end{proof}
Now we consider the case of $r\ne0$.
\begin{theorem}\label{uniquesolution}
Assume that $c_0>2$. Then there exist $\tilde r_1\in(0,r_1) $, and a continuously differentiable mapping $r \mapsto (z_r,\beta_r,h_r,\theta_r)$ from $[0,\tilde r_1]$ to $ (X_1)_\mathbb{C} \times \mathbb{R}^3$ such that $G(z_r, \beta_r, h_r, \theta_r, r) =0$. Moreover, for $r \in [0, \tilde r_1]$, $(z_r, \beta_r, h_r, \theta_r)$ is the unique solution of the following problem
\begin{equation}
\begin{cases}
G(z, \beta, h, \theta, r)=0 ,\\
z \in\left(X_{1}\right)_{\mathbb{C}},\; h>0,\; \beta \geq 0,\; \theta \in[0,2 \pi).
\end{cases}
\end{equation}

\end{theorem}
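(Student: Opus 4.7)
The natural approach is the implicit function theorem, using Lemma \ref{z0beta0h0theta0} to provide the base point at $r=0$. View $G=(g_1,g_2)$ as a $C^1$ map from the real Banach space $(X_1)_{\mathbb C}\times \mathbb R^3\times \mathbb R$ into $Y_{\mathbb C}\times \mathbb R$ (with $z$ complex and $\beta,h,\theta$ real). At the point $(z_0,\beta_0,h_0,\theta_0,0)$ supplied by Lemma \ref{z0beta0h0theta0} we have $G=0$; the task is to verify that the Fr\'echet derivative $T:=D_{(z,\beta,h,\theta)}G(z_0,\beta_0,h_0,\theta_0,0)$ is a bounded isomorphism, after which IFT yields a $C^1$ branch $r\mapsto (z_r,\beta_r,h_r,\theta_r)$ on some $[0,\tilde r_1]$.

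\textbf{Invertibility of $T$.} I would compute $T(\chi,\kappa,\epsilon,\vartheta)$ directly from \eqref{g1g2}. Since $r=0$, the $g_2$ component contributes $2c_0^2|\Omega|\,\kappa$, forcing $\kappa=0$ in the kernel. For $g_1$, differentiation at the base point gives
\begin{equation*}
T_1(\chi,\kappa,\epsilon,\vartheta)=\Delta\chi+c_0\!\left[e^{-{\rm i}\theta_0}p(x)f'(c_0)-\delta(x)-{\rm i}h_0\right]\kappa-{\rm i}c_0\,\epsilon-{\rm i}c_0 e^{-{\rm i}\theta_0}p(x)f'(c_0)\,\vartheta.
\end{equation*}
With $\kappa=0$, integrating $T_1=0$ over $\Omega$ kills $\Delta\chi$ (Neumann) and produces
\begin{equation*}
\epsilon+e^{-{\rm i}\theta_0}f'(c_0)\bar p\,\vartheta=0.
\end{equation*}
Using $f'(c_0)=(1-c_0)\bar\delta/\bar p$ together with \eqref{cos0h0}, one has $e^{-{\rm i}\theta_0}(1-c_0)\bar\delta=\bar\delta+{\rm i}h_0$, so taking real and imaginary parts yields $\epsilon+\bar\delta\vartheta=0$ and $h_0\vartheta=0$. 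Because $c_0>2$ forces $h_0=\bar\delta\sqrt{c_0^2-2c_0}>0$ (equivalently $\sin\theta_0\ne 0$), we get $\vartheta=0$ and then $\epsilon=0$. The remaining equation reduces to $\Delta\chi=0$ in $(X_1)_{\mathbb C}$, whence $\chi=0$. Surjectivity then follows by observing that $T$ is a compact/finite-rank perturbation of the invertible map $\Delta:(X_1)_{\mathbb C}\to (Y_1)_{\mathbb C}$ coupled with the invertible finite-dimensional block on $(\kappa,\epsilon,\vartheta)$; explicitly, given any $(g_1^\ast,g_2^\ast)\in Y_{\mathbb C}\times\mathbb R$, solve for $\kappa$ from the $g_2$ equation, for $(\epsilon,\vartheta)$ from the constant-mode projection of the $g_1$ equation, and for $\chi\in (X_1)_{\mathbb C}$ by inverting $\Delta$ on the remainder.

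\textbf{Uniqueness on $[0,\tilde r_1]$.} IFT only gives local uniqueness near $(z_0,\beta_0,h_0,\theta_0)$. To upgrade to uniqueness within the constraint set $\{z\in(X_1)_{\mathbb C},\,\beta\ge 0,\,h>0,\,\theta\in[0,2\pi)\}$ I will argue by contradiction: suppose $(z_{r_n},\beta_{r_n},h_{r_n},\theta_{r_n})$ is a second sequence of solutions with $r_n\downarrow 0$. An a priori bound analogous to Lemma \ref{bounded} (applied to $\nu=r h$) shows $h_{r_n}$ remains bounded, $\beta_{r_n}\in[0,1]$ from $g_2=0$, and $\theta_{r_n}\in[0,2\pi)$ is bounded. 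Applying $\Delta^{-1}$ on $(X_1)_{\mathbb C}$ to $g_1=0$, as in the proof of Theorem \ref{sstability}, gives that $\{z_{r_n}\}$ is precompact in $(X_1)_{\mathbb C}$. Extracting a convergent subsequence and passing to the limit in $G=0$, the limit must satisfy the $r=0$ system, which by Lemma \ref{z0beta0h0theta0} has only the solution $(z_0,\beta_0,h_0,\theta_0)$. Hence these sequences eventually enter any prescribed IFT neighborhood and therefore coincide with the branch $(z_r,\beta_r,h_r,\theta_r)$ constructed above, establishing uniqueness on some $[0,\tilde r_1]\subset (0,r_1)$.

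\textbf{Main obstacle.} The computational step that really uses the hypothesis is the invertibility of $T$: everything hinges on $h_0\ne 0$, equivalently $\sin\theta_0\ne 0$, which is precisely where the threshold $c_0>2$ enters. Verifying this together with the orthogonal decomposition bookkeeping (separating the $\mathscr N(\Delta)_{\mathbb C}$ and $(Y_1)_{\mathbb C}$ components of $g_1$) is the most delicate part; the compactness argument for global uniqueness is routine once the a priori bounds from Lemma \ref{bounded} are in hand.
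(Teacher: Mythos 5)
Your proposal is correct and follows essentially the same route as the paper: the implicit function theorem at the base point from Lemma \ref{z0beta0h0theta0}, injectivity of the linearization $T$ (reduced via $T_2$ to $\kappa=0$ and then, using $h_0>0$, to $\epsilon=\vartheta=0$ and $\chi=0$), followed by an a priori bound and compactness argument to upgrade local to global uniqueness on $[0,\tilde r_1]$. You in fact supply more detail than the paper at the two places it is terse, namely the explicit computation showing $\epsilon=\vartheta=0$ and the surjectivity of $T$.
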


\begin{proof}
Let $T=(T_1,T_2):=(X_1)_{\mathbb{C}} \times \mathbb{R}^3 \rightarrow  Y_{\mathbb{C}} \times \mathbb{R} $ be the Fr\'{e}chet derivative of $G$ with respect to $(z,\beta,h,\theta) $ at  $(z_0,\beta_0,h_0,\theta_0,0) $. Thus, we have
$$
\begin{aligned}
T_{1}(\chi, \kappa, \epsilon, \vartheta)=& \Delta \chi-{\rm i} \epsilon c_0- {\rm i} \vartheta c_0 f^\prime(c_0) p(x)e^{-{\rm i} \theta_0} \\
&+\kappa c_0\left[ f^\prime(c_0) p(x) e^{-{\rm i} \theta_0}- c_0 \delta(x)- {\rm i} h_0 c_0 \right], \\
T_{2}(\kappa)=& 2 \kappa c_0^{2} |\Omega|.
\end{aligned}
$$
Then, we check that $T$ is a bijection from $(X_1)_{\mathbb{C}} \times \mathbb{R}^3$ to $Y_{\mathbb{C}} \times \mathbb{R} $, and we only need to verify that $T$ is an injective mapping. If $T_2(\kappa)=0$, then $\kappa=0$, and substituting $\kappa=0$ into $T_1$, i.e. $T_{1}(\chi, 0, \epsilon, \vartheta)=0$, we have $\epsilon=\vartheta=0 $. Therefore, $T$ is an injection. This, combined with the implicit function theorem, implies that there exist $\tilde r_1 >0$, and a continuously differentiable mapping $r\mapsto(z_r,\beta_r,h_r,\theta_r)$ from $[0,\tilde r_1]$ to $ (X_1)_\mathbb{C} \times \mathbb{R}^3$ such that $G(z_r, \beta_r, h_r, \theta_r, r) =0$. Next, we prove the uniqueness. We only need to verify that if
$z^r \in\left(X_{1}\right)_{\mathbb{C}}, \beta^r\ge0, h^r>0 ,\theta^r \in[0,2 \pi)$, and $G(z^r, \beta^r, h^r, \theta^r, r)=0 $, then
$$
\left(z^{r}, \beta^{r}, h^{r}, \theta^{r}\right) \rightarrow\left(z_{0}, \beta_{0}, h_{0}, \theta_{0}\right)=\left(z_0, 1, h_{0}, \theta_0 \right)
$$
as $r \rightarrow 0$ in the norm of $ (X)_\mathbb{C} \times \mathbb{R}^3$. It follows from Lemma \ref{bounded} and Eq. \eqref{g1g2} that $\{h^r\},\{\beta^r\}$ and $\{\theta^r\}$ are bounded for $r \in [0, \tilde r_1]$. From Lemma\ref{r2}, we can calculate that there are $M_1, M_2 > 0$ such that
$$
r_{2}\left\|z^r\right\|_{Y_{\mathbb{C}}}^{2} \leq|\langle \Delta z, z\rangle| \leq M_{1}\left\|z^{r}\right\|_{Y_{\mathbb{C}}}+M_{2}r\left\|z^r\right\|_{Y_{\mathbb{C}}}^{2},
$$
where $r_{2}$ is defined as in Lemma \ref{r2}. So, for sufficiently small $\tilde r_1$, $\left\{z^r\right\}$ is bounded in $Y_{\mathbb{C}}$ for $r \in\left[0, \tilde r_1\right] .$ Since the operator $\Delta: (X_1)_{\mathbb{C}} \mapsto  (Y_1)_{\mathbb{C}}$ has a bounded inverse, and by applying $\Delta ^{-1}$ on $g_1(z^{r}, \beta^{r}, h^{r}, \theta^{r} ,r)=0$, we obtain that $\left\{z^r\right\}$ is also bounded in $\left(X_{1}\right)_{\mathbb{C}}$. Therefore, we see that $\left\{\left(z^r, \beta^r, h^r, \theta^r\right): r \in\left(0, \tilde{r}_1 \right]\right\}$ is precompact in $Y_{\mathbb{C}} \times \mathbb{R}^{3}.$ Then, there exists a subsequence $\left\{\left(z^{r^n}, r^{r^{n}}, h^{r^{n}}, \theta^{r^{n}}\right)\right\}$ such that
$$
\left(z^{r^n}, \beta^{r^n}, h^{r^n}, \theta^{r^n}\right) \rightarrow\left(z^{0}, r^{0}, h^{0}, \theta^{0}\right), \quad r^n \rightarrow 0 \text { as } n \rightarrow \infty.
$$
Taking the limit of the equation $\Delta ^{-1} g_1\left(z^{r^n}, r^{r^{n}}, h^{r^{n}}, \theta^{r^{n}}\right)=0 $ as $n \rightarrow \infty$, we see that $G(z^0,\beta^0,h^0,\theta^0,0)=0 $. From Lemma \ref{z0beta0h0theta0}, we see that
$$
\left(z^{0}, r^{0}, h^{0}, \theta^{0}\right)= \left(z_{0}, r_{0}, h_{0}, \theta_{0}\right).
$$
Therefore, $ \left(z^{r}, \beta^{r}, h^{r}, \theta^{r}\right) \rightarrow\left(z_{0}, r_{0}, h_{0}, \theta_{0}\right)$ as $r  \rightarrow 0$ in the norm of $ (X)_\mathbb{C} \times \mathbb{R}^3$. This completes the proof.

\end{proof}
The following result is deduced directly from Theorem \ref{uniquesolution}.
\begin{theorem}\label{solutionvtaupsi}
Assume that $c_0>2$. Then, for each $r \in (0,\tilde r_1]$, the following equation
$$
\left\{\begin{array}{l}
{\Delta(r, {\rm i} \nu, \tau) \psi=0} \\
{\nu >0,\; \tau \geq 0,\; \psi(\neq 0) \in X_{\mathbb{C}}}
\end{array}\right.
$$
has a solution $(\nu,\tau,\psi)$, i.e. $ {\rm i}\nu \in \sigma(A_\tau(r))$ if and only if

\begin{equation}\label{taunupsi}
\nu = \nu_{r}=r h_{r},\; \psi=k \psi_{r},\; \tau=\tau_{n}=\frac{\theta_{r}+2 n \pi}{\nu_r},\; n=0,1,2, \cdots,
\end{equation}
where $\psi_r=\beta_r c_0+r z_r $, $k$ is a nonzero constant, and $z_r,\beta_r,h_r,\theta_r$ are defined as in Theorem \ref{uniquesolution}.
\end{theorem}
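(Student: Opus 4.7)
The plan is to translate the eigenvalue problem $\Delta(r,{\rm i}\nu,\tau)\psi=0$ into the already–solved system $G(z,\beta,h,\theta,r)=0$ from Theorem~\ref{uniquesolution}, then read off the conclusion using uniqueness. The key observation is the one already implicit in the derivation of \eqref{g1g2}: a triple $(\nu,\tau,\psi)$ with $\nu>0$, $\tau\ge 0$, $\psi(\neq 0)\in X_{\mathbb{C}}$ solves $\Delta(r,{\rm i}\nu,\tau)\psi=0$ if and only if, after writing $\psi=k(\beta c_0+rz)$ with $k\neq 0$, $\beta\ge 0$, $z\in(X_1)_{\mathbb{C}}$ normalized by $\beta^2c_0^2|\Omega|+r^2\|z\|^2_{Y_{\mathbb{C}}}=c_0^2|\Omega|$, and setting $h:=\nu/r$ and $\theta:=\nu\tau\pmod{2\pi}\in[0,2\pi)$, the quadruple $(z,\beta,h,\theta)$ satisfies $G(z,\beta,h,\theta,r)=0$.

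For the ``if'' part, I take $(z_r,\beta_r,h_r,\theta_r)$ from Theorem~\ref{uniquesolution}, form $\psi_r=\beta_rc_0+rz_r$, set $\nu_r=rh_r$ and $\tau_n=(\theta_r+2n\pi)/\nu_r$ for $n\in\mathbb{N}\cup\{0\}$, and verify by direct substitution that $\Delta(r,{\rm i}\nu_r,\tau_n)(k\psi_r)=0$ (using $e^{-{\rm i}\nu_r\tau_n}=e^{-{\rm i}\theta_r}$ and $G(z_r,\beta_r,h_r,\theta_r,r)=0$). Linearity of $\Delta(r,\mu,\tau)$ in $\psi$ allows the free scalar $k$.

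For the ``only if'' part, I start from an arbitrary solution $(\nu,\tau,\psi)$. Since Theorem~\ref{sstability}-type computations give $\psi\notin \ker(\text{projection onto }\mathscr{N}(\Delta))\cap\{0\}$ only in a controlled way, I choose $k\in\mathbb{C}\setminus\{0\}$ so that $\tilde\psi:=\psi/k$ satisfies $\|\tilde\psi\|_{Y_{\mathbb{C}}}^2=c_0^2|\Omega|$ and the component of $\tilde\psi$ along $\phi=1$ is nonnegative; this is possible because the projection of $\psi$ onto $\mathscr{N}(\Delta)$ is nonzero (otherwise one would substitute $\psi=rz$ into $\Delta(r,{\rm i}\nu,\tau)\psi=0$, integrate against $\phi$, and obtain $\nu=0$ via Lemma~\ref{bounded}-style estimates, a contradiction with $\nu>0$). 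Writing $\tilde\psi=\beta c_0+rz$ and setting $h=\nu/r>0$, $\theta=\nu\tau-2n\pi\in[0,2\pi)$ for the unique $n\in\mathbb{N}\cup\{0\}$ that places $\theta$ in this interval, the equivalence above shows $(z,\beta,h,\theta)$ solves $G(z,\beta,h,\theta,r)=0$. The uniqueness assertion in Theorem~\ref{uniquesolution} then forces $(z,\beta,h,\theta)=(z_r,\beta_r,h_r,\theta_r)$, giving $\nu=\nu_r$, $\tau=\tau_n$, and $\psi=k\psi_r$, as claimed.

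The only genuinely delicate step is justifying that the projection of $\psi$ onto $\mathscr{N}(\Delta)$ is nonzero, so that the normalization producing $\beta\ge 0$ is legitimate; all remaining work is bookkeeping between $(\nu,\tau)$ and $(h,\theta)$ and an appeal to uniqueness from the preceding theorem.
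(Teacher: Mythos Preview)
Your approach is correct and is exactly what the paper intends: the paper's own proof is the single sentence ``deduced directly from Theorem~\ref{uniquesolution}'', and you have supplied the bookkeeping between $(\nu,\tau,\psi)$ and $(z,\beta,h,\theta)$ that makes this deduction work.

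One remark on what you call the ``genuinely delicate step'': it is not actually needed, and the argument you sketch for it does not go through. If the projection of $\psi$ onto $\mathscr{N}(\Delta)$ happens to vanish, then after normalizing $\|\tilde\psi\|_{Y_{\mathbb{C}}}^{2}=c_0^2|\Omega|$ you simply have $\beta=0$, which still lies in the admissible range $\beta\ge 0$ of Theorem~\ref{uniquesolution}. The uniqueness assertion there would then force $\beta_r=0$, contradicting $\beta_r\to\beta_0=1$ as $r\to 0$. So uniqueness alone disposes of this case, with no separate argument required. As for the argument you sketch --- integrating $\Delta(r,{\rm i}\nu,\tau)(rz)=0$ against $\phi\equiv 1$ --- note that since $z\in(X_1)_{\mathbb{C}}$ one has $\int_\Omega z\,dx=0$, so the term $-{\rm i}\nu r\int_\Omega z\,dx$ vanishes and what remains is only $e^{-{\rm i}\nu\tau}\int_\Omega p(x)f'(u_r)z\,dx=\int_\Omega\delta(x)z\,dx$, which does not force $\nu=0$.
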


Then we give the following estimates to show that ${\rm i}\nu_r$ (obtained in Theorem \eqref{solutionvtaupsi}) is simple  and  the transversality condition holds.
\begin{lemma}\label{Sn}
Assume that $c_0>2$, and let
\begin{equation}\label{Sn(r)}
S_{n}(r):=\int_{\Omega} \psi_{r}^{2} d x+r \tau_{n} e^{-{\rm i} \theta_{r}} \int_{\Omega} p(x)f^\prime(u_r)  \psi_{r}^{2} d x,
\end{equation}
where $\psi_{r}$, $\tau_n$ and $\theta_r$ are defined as in Theorem \ref{solutionvtaupsi}.
Then $\lim_{r\to0} S_n(r)\ne0$ for $n =0, 1, 2, \cdots$.
\end{lemma}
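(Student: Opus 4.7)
The plan is to compute $\lim_{r\to 0} S_n(r)$ explicitly and to show that its imaginary part is nonzero by exploiting the sign of $f'(c_0)$ and the sign of $\sin\theta_0$.

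First I would pass to the limit $r \to 0$ inside each of the two integrals defining $S_n(r)$. By Proposition \ref{pp2}, $u_r \to c_0$ as $r\to 0$, and by Theorem \ref{uniquesolution} the map $r \mapsto (z_r,\beta_r,h_r,\theta_r)$ is continuously differentiable on $[0,\tilde r_1]$ with $(z_0,\beta_0,h_0,\theta_0)$ given by Lemma \ref{z0beta0h0theta0}. In particular $\beta_0 = 1$, so $\psi_r = \beta_r c_0 + r z_r \to c_0$ in $Y_{\mathbb C}$. Combining these convergences and using the definition $\tau_n = (\theta_r + 2n\pi)/(rh_r)$, so that $r\tau_n \to (\theta_0+2n\pi)/h_0$, I obtain
\begin{equation*}
\lim_{r\to 0} S_n(r) = c_0^2|\Omega| + \frac{\theta_0 + 2n\pi}{h_0}\, e^{-{\rm i}\theta_0}\, c_0^2 f'(c_0)\, \bar p\, |\Omega|.
\end{equation*}

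Next I would isolate the imaginary part of this limit. Since $e^{-{\rm i}\theta_0} = \cos\theta_0 - {\rm i}\sin\theta_0$ and all the other factors are real,
\begin{equation*}
\mathcal{I}m\Bigl(\lim_{r\to 0} S_n(r)\Bigr) = -\frac{(\theta_0 + 2n\pi)\, c_0^2\, f'(c_0)\, \bar p\, |\Omega|}{h_0}\,\sin\theta_0.
\end{equation*}
I would then check each factor: $f(u) = ue^{-u}$ gives $f'(c_0) = (1-c_0)e^{-c_0} < 0$ since $c_0 > 2$; from \eqref{cos0h0}, $\sin\theta_0 = \sqrt{c_0^2 - 2c_0}/(c_0 - 1) > 0$; Lemma \ref{z0beta0h0theta0} gives $h_0 = \bar\delta\sqrt{c_0^2 - 2c_0} > 0$; and $\bar p, |\Omega|, c_0^2 > 0$. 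Finally, since $\cos\theta_0 = 1/(1-c_0) < 0$ with $\sin\theta_0 > 0$, we have $\theta_0 \in (\pi/2,\pi)$, so $\theta_0 + 2n\pi > 0$ for every $n \in \mathbb{N}\cup\{0\}$. The product of these signs shows that the imaginary part is strictly positive, hence $\lim_{r\to 0} S_n(r) \ne 0$.

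The only mildly delicate step is justifying the passage to the limit in the two integrals; this is routine because $\psi_r$ and $u_r$ are continuous in $r$ with values in $Y_{\mathbb C}$ and $X$ respectively, $p(x)\in L^\infty(\Omega)$, and $f'$ is continuous, so dominated convergence (or simply continuity of $u\mapsto \int_\Omega p(x)f'(u_r)\psi_r^2\,dx$ in the relevant norms) applies. No Fredholm or spectral argument is needed here since the statement is a pure sign computation once the limits are identified.
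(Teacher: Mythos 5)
Your proof is correct and follows essentially the same route as the paper: pass to the limit using $\psi_r\to c_0$, $\theta_r\to\theta_0$, $r\tau_n\to(\theta_0+2n\pi)/h_0$ to obtain $\lim_{r\to0}S_n(r)=\bigl[1+\frac{\theta_0+2n\pi}{h_0}f'(c_0)e^{-{\rm i}\theta_0}\bar p\bigr]c_0^2|\Omega|$. The paper merely asserts that this quantity is nonzero, whereas you actually verify it via the sign of the imaginary part (note that the second equation of \eqref{sed} gives $-f'(c_0)\bar p\sin\theta_0=h_0$, so the imaginary part collapses to $(\theta_0+2n\pi)c_0^2|\Omega|>0$), which makes your write-up slightly more complete than the original.
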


\begin{proof}
From Theorems \ref{uniquesolution} and \ref{solutionvtaupsi}, we obtain that $\theta_{r} \rightarrow \theta_{0}, \tau_{n} r \rightarrow\left(\theta_0 +2 n \pi\right)/ h_{0},$ $\psi_r\rightarrow c_0 $ as $r \rightarrow 0$. This implies that
\begin{equation}
\begin{aligned}
\lim _{r \rightarrow 0} S_{n}(r) &=\int_{\Omega} c_0^2 d x+\frac{\theta_0 +2n\pi}{h_0}c_0^2 f^\prime(c_0) e^{-{\rm i} \theta_0}\int_{\Omega} p(x) d x \\
&=\left[1+\frac{\theta_0 +2n\pi}{h_0} f^\prime(c_0)e^{-{\rm i} \theta_0} \bar p \right] c_0^2 |\Omega| \neq 0.
\end{aligned}
\end{equation}
This completes the proof.
\end{proof}

Now we show that ${\rm i}\nu_r$  is simple.

\begin{theorem}\label{simpleeigenvalue}
Assume that $c_0>2$. For each $r \in (0, \check r_1]$ and $n=0,1,2, \cdots$, where $\check r_1$ is sufficiently small, $ \mu={\rm i} \nu_r$ is a simple eigenvalue of $A_{\tau_n}$.
\end{theorem}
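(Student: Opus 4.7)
The plan is to show that ${\rm i}\nu_r$ has geometric multiplicity one and that $\mathscr{N}\!\bigl((A_{\tau_n}(r)-{\rm i}\nu_r)^2\bigr)=\mathscr{N}(A_{\tau_n}(r)-{\rm i}\nu_r)$, which together give simplicity. The quantity $S_n(r)$ of Lemma \ref{Sn} will appear as the Fredholm solvability condition blocking generalized eigenvectors, and nonvanishing of $S_n(r)$ for small $r$ (guaranteed by Lemma \ref{Sn}) will be what selects the bound $\check r_1$.

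First I would determine $\mathscr{N}(A_{\tau_n}(r)-{\rm i}\nu_r)$. If $A_{\tau_n}(r)\Psi={\rm i}\nu_r\Psi$, then $\dot\Psi={\rm i}\nu_r\Psi$ forces $\Psi(\theta)=\Psi(0)e^{{\rm i}\nu_r\theta}$ for $\theta\in[-\tau_n,0]$, and the domain condition at $\theta=0$ reduces to $\Delta(r,{\rm i}\nu_r,\tau_n)\Psi(0)=0$. By the uniqueness part of Theorem \ref{solutionvtaupsi}, $\Psi(0)=k\psi_r$, so $\dim\mathscr{N}(A_{\tau_n}(r)-{\rm i}\nu_r)=1$ with eigenfunction $\Psi_1(\theta):=\psi_r e^{{\rm i}\nu_r\theta}$.

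Next, suppose $\Psi_2\in\mathscr{N}\!\bigl((A_{\tau_n}(r)-{\rm i}\nu_r)^2\bigr)$; then $(A_{\tau_n}(r)-{\rm i}\nu_r)\Psi_2=c\Psi_1$ for some $c\in\mathbb C$. Solving the first-order ODE $\dot\Psi_2-{\rm i}\nu_r\Psi_2=c\psi_r e^{{\rm i}\nu_r\theta}$ gives
\begin{equation*}
\Psi_2(\theta)=e^{{\rm i}\nu_r\theta}\bigl[\Psi_2(0)+c\,\theta\,\psi_r\bigr],\qquad \theta\in[-\tau_n,0].
\end{equation*}
Plugging $\Psi_2(-\tau_n)=e^{-{\rm i}\nu_r\tau_n}[\Psi_2(0)-c\tau_n\psi_r]$ into the domain condition at $\theta=0$, and using $e^{-{\rm i}\nu_r\tau_n}=e^{-{\rm i}\theta_r}$ (from \eqref{taunupsi}), I obtain
\begin{equation*}
\Delta(r,{\rm i}\nu_r,\tau_n)\Psi_2(0)=c\psi_r+c\tau_n r e^{-{\rm i}\theta_r}p(x)f'(u_r)\psi_r.
\end{equation*}
This is solvable for $\Psi_2(0)\in X_{\mathbb C}$ exactly when the right-hand side is orthogonal to $\mathscr{N}\!\bigl(\Delta(r,{\rm i}\nu_r,\tau_n)^\ast\bigr)$. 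Since $p,\delta,f'(u_r)$ are real, a direct computation with the standard inner product shows that $\Delta(r,{\rm i}\nu_r,\tau_n)^\ast=\Delta+re^{{\rm i}\theta_r}p(x)f'(u_r)-r\delta(x)+{\rm i}\nu_r$, whose kernel is spanned by $\bar\psi_r$. The resulting solvability condition reads
\begin{equation*}
c\Bigl[\,\int_\Omega\psi_r^2\,dx+r\tau_n e^{-{\rm i}\theta_r}\!\int_\Omega p(x)f'(u_r)\psi_r^2\,dx\,\Bigr]=cS_n(r)=0.
\end{equation*}

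By Lemma \ref{Sn}, $\lim_{r\to0}S_n(r)\neq0$, so there exists $\check r_1\in(0,\tilde r_1]$ with $S_n(r)\neq0$ for all $r\in(0,\check r_1]$; hence $c=0$, which means $\Psi_2\in\mathscr{N}(A_{\tau_n}(r)-{\rm i}\nu_r)$ and completes the proof. The only nontrivial step is the Fredholm bookkeeping that identifies the compatibility condition with $S_n(r)$; the rest is routine once the eigenfunction structure is pinned down.
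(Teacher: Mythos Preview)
Your proposal is correct and follows essentially the same approach as the paper: both reduce to showing that the equation $\Delta(r,{\rm i}\nu_r,\tau_n)\Psi_2(0)=c\bigl[\psi_r+r\tau_n e^{-{\rm i}\theta_r}p(x)f'(u_r)\psi_r\bigr]$ forces $cS_n(r)=0$, and then invoke Lemma~\ref{Sn}. The only cosmetic difference is that the paper extracts this condition via the pairing identity $\langle \Delta(r,-{\rm i}\nu_r,\tau_n)\bar\psi_r,\Psi_2(0)\rangle=\langle\bar\psi_r,\Delta(r,{\rm i}\nu_r,\tau_n)\Psi_2(0)\rangle$, whereas you phrase it as a Fredholm solvability condition against $\mathscr{N}(\Delta(r,{\rm i}\nu_r,\tau_n)^*)=\operatorname{Span}\{\bar\psi_r\}$; these are the same computation. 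The paper also writes out the induction $\mathscr{N}\bigl((A_{\tau_n}(r)-{\rm i}\nu_r)^j\bigr)=\mathscr{N}(A_{\tau_n}(r)-{\rm i}\nu_r)$ for all $j\ge2$, but as you implicitly use, equality at $j=2$ already implies this for all $j$.
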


\begin{proof}
Firstly, from Theorem \ref{solutionvtaupsi}, we have $\mathscr{N}\left[A_{\tau_{n}}(r)-{\rm i} \nu_{r}\right]=\operatorname{Span}\left[e^{{\rm i} \nu_{r} \theta} \psi_{r}\right]$, where $\theta \in [- \tau_n,0]$ and $\psi_{r}$ is defined as in Theorem \ref{solutionvtaupsi}. If $\phi_{1} \in \mathscr{N}\left[A_{\tau_{n}}(r)-{\rm i} \nu_{r}\right]^2$, i.e. $ \left[A_{\tau_{n}}(r)-{\rm i} \nu_{r}\right]^2 \phi_{1}=0$, then
$$
\left[A_{\tau_{n}}(r)-{\rm i} \nu_{r}\right] \phi_{1} \in \mathscr{N}\left[A_{\tau_{n}}(r)-{\rm i} \nu_{r}\right]=\operatorname{Span}\left[e^{{\rm i} \nu_{r} \theta} \psi_{r}\right].
$$
Therefore, there is a constant number $a$ such that
$$
\left[A_{\tau_{n}}(r)-{\rm i} \nu_{r}\right] \phi_{1}=a e^{{\rm i} \nu_{r} \theta} \psi_{r},
$$
which yields
\begin{equation}\label{dotphi}
\begin{aligned}
\dot{\phi}_{1}(\theta) &={\rm i} \nu_{r} \phi_{1}(\theta)+a e^{{\rm i} \nu_{r} \theta} \psi_{r}, \quad \theta \in\left[-\tau_{n}, 0\right], \\
\dot{\phi}_{1}(0) &=\Delta {\phi}_{1}(0)+r p(x) f^{\prime}\left(u_{r}\right) {\phi}_{1}(-\tau_n)- r \delta(x) {\phi}_{1}(0).
\end{aligned}
\end{equation}
From Eq. \eqref{dotphi}, we deduce that
\begin{equation}\label{phi}
\begin{aligned} \phi_{1}(\theta) &=\phi_{1}(0) e^{{\rm i} \nu_{r} \theta}+a \theta e^{{\rm i} \nu_{r} \theta} \psi_{r}, \\
\dot{\phi}_{1}(0) &={\rm i} \nu_{r} \phi_{1}(0)+a \psi_{r}.
\end{aligned}
\end{equation}
Then Eqs. \eqref{dotphi} and \eqref{phi} imply that
\begin{equation}\label{Deltaphi1}
\begin{aligned}
\Delta\left(r, {\rm i} \nu_{r}, \tau_{n}\right) \phi_{1}(0)=& \Delta {\phi}_{1}(0)+r p(x) f^{\prime}\left(u_{r}\right) {\phi}_{1}(0) e^{-{\rm i} \theta_r}- r \delta(x) {\phi}_{1}(0)-{\rm i} \nu_r {\phi}_{1}(0) \\
=& a \left(\psi_{r}+r \tau_n \psi_{r} p(x) f^{\prime}\left(u_{r}\right)e^{-{\rm i} \theta_r}\right).
\end{aligned}
\end{equation}
This yields
$$
\begin{aligned}
0 &=\left\langle \Delta\left(r,-{\rm i} \nu_r, \tau_{n}\right) \bar{\psi}_r, \phi_{1}(0)\right\rangle=\left\langle\bar{\psi}_r, \Delta\left(r, {\rm i} \nu_r, \tau_{n}\right) \phi_{1}(0)\right\rangle \\
&= a\left(\int_{\Omega} \psi_{r}^{2} d x+r \tau_{n} e^{-{\rm i} \theta_{r}} \int_{\Omega}p(x) f^{\prime}\left(u_{r}\right) \psi_{r}^{2}  d x\right).
\end{aligned}
$$
As a consequence of Lemma \ref{Sn}, we obtain $a = 0$ for $r\in(0,\check r_1]$, where $\check r_1$ is sufficiently small. This leads to that $ \left[A_{\tau_{n}}(r)-{\rm i} \nu_{r}\right] \phi=0$ and $\phi \in \mathscr{N}\left[A_{\tau_{n}}(r)-{\rm i} \nu_{r}\right]$. By induction we obtain
$$
\mathscr{N}\left[A_{\tau_{n}}(r)-{\rm i} \nu_r\right]^{j}=\mathscr{N}\left[A_{\tau_{n}}(r)-{\rm i} \nu_r\right], \quad j=2,3, \cdots, n=0,1,2, \cdots.
$$
Hence, $r={\rm i} \nu_r$ is a simple eigenvalue of $A_{\tau_{n}}$ for $n=0,1,2, \cdots$.
\end{proof}

Note that $\mu={\rm i} \nu_r$ is a simple eigenvalue of $A_{\tau_{n}}$, and by using the implicit function theorem we can verify that there are a neighborhood $O_{n} \times D_{n} \times H_{n} \subset \mathbb{R} \times \mathbb{C} \times X_{\mathbb{C}}$ of $\left(\tau_{n}, {\rm i} \nu_r, \psi_r\right)$ and a continuously differential function $(\mu(\tau), \psi(\tau)): O_{n} \rightarrow D_{n} \times H_{n}$ such that for each $\tau \in O_{n},$ the only eigenvalue of $A_{\tau}(r)$ in $D_{n}$ is $\mu(\tau),$ and
\begin{equation}\label{Deltapsi}
\Delta(r, \mu(\tau), \tau) \psi(\tau):=\Delta \psi(\tau)+r p(x) f'\left(u_{r}\right) \psi(\tau)e^{-\mu(\tau) \tau}- r \delta(x) \psi(\tau)-\mu(\tau) \psi(\tau)=0,
\end{equation}
where $\mu\left(\tau_{n}\right)={\rm i} \nu_r,$ and $\psi\left(\tau_{n}\right)=\psi_r$. Next, we show that the transversality condition holds.

\begin{theorem}\label{Remu}
Assume that $c_0>2$ and $r \in (0, \check r_1]$, where $\check r_1$ is sufficiently small. Then
$$
\frac{d \mathcal{R} e\left[\mu\left(\tau_{n}\right)\right]}{d \tau}>0, \quad n=0,1,2, \cdots.
$$
\end{theorem}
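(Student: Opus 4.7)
The plan is the standard Hopf transversality computation: differentiate the characteristic identity \eqref{Deltapsi} in $\tau$, evaluate at $\tau=\tau_n$, pair against $\bar\psi_r$ to cancel the unknown derivative $\psi'(\tau_n)$, obtain an explicit formula for $\mu'(\tau_n)$, and then show its real part is positive by expanding as $r\to 0$.

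First I would differentiate $\Delta(r,\mu(\tau),\tau)\psi(\tau)=0$ with respect to $\tau$. Using $\mu(\tau_n)={\rm i}\nu_r$, $\psi(\tau_n)=\psi_r$, and $e^{-\mu(\tau_n)\tau_n}=e^{-{\rm i}\theta_r}$, all $\psi'(\tau_n)$-terms regroup into $\Delta(r,{\rm i}\nu_r,\tau_n)\psi'(\tau_n)$, yielding
$$
\Delta(r,{\rm i}\nu_r,\tau_n)\psi'(\tau_n)=\mu'(\tau_n)\bigl[\psi_r+r\tau_n e^{-{\rm i}\theta_r}p(x)f'(u_r)\psi_r\bigr]+{\rm i}\nu_r r e^{-{\rm i}\theta_r}p(x)f'(u_r)\psi_r.
$$
Pairing with $\bar\psi_r$ in $Y_{\mathbb C}$ and using that $\Delta$ is self-adjoint under Neumann conditions while $p(x)f'(u_r)$ and $\delta(x)$ are real, one checks $\langle\bar\psi_r,\Delta(r,{\rm i}\nu_r,\tau_n)\psi'(\tau_n)\rangle=\int_\Omega \psi'(\tau_n)\,\Delta(r,{\rm i}\nu_r,\tau_n)\psi_r\,dx=0$. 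This produces the clean identity
$$
\mu'(\tau_n)\,S_n(r)=-{\rm i}\nu_r\, r\, e^{-{\rm i}\theta_r}\int_\Omega p(x)f'(u_r)\psi_r^2\,dx,
$$
with $S_n(r)$ exactly the quantity of Lemma \ref{Sn}, which is nonzero for small $r$.

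Next I would extract the leading-order behavior as $r\to 0$ from Theorems \ref{uniquesolution} and \ref{solutionvtaupsi}: $\psi_r\to c_0$, $u_r\to c_0$, $\theta_r\to\theta_0$, and $\nu_r=rh_r$. The defining relations for $(\theta_0,h_0)$ together with $f'(c_0)=(1-c_0)e^{-c_0}$ and $e^{-c_0}\bar p=\bar\delta$ give the key simplification
$$
f'(c_0)\bar p\,e^{-{\rm i}\theta_0}=\bar\delta+{\rm i}h_0.
$$
Setting $\omega_n=\theta_0+2n\pi$, the numerator therefore behaves like $r^2 c_0^2|\Omega|\,h_0(h_0-{\rm i}\bar\delta)$ and the denominator like $c_0^2|\Omega|\bigl[1+\omega_n\bar\delta/h_0+{\rm i}\omega_n\bigr]$. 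Multiplying by the conjugate of the denominator and reading off the real part, the cross-terms $h_0\cdot\omega_n\bar\delta/h_0$ and $(-{\rm i}\bar\delta)(-{\rm i}\omega_n)=-\bar\delta\omega_n$ cancel, leaving
$$
\mathcal{R}e\,\mu'(\tau_n)=\frac{r^2 h_0^2}{(1+\omega_n\bar\delta/h_0)^2+\omega_n^2}+o(r^2)>0
$$
for all sufficiently small $r$.

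The main obstacle is the algebraic cancellation in the last step: one must verify that the two $\bar\delta\omega_n$ contributions—one from $h_0\cdot(\omega_n\bar\delta/h_0)$ in $N\bar D$ and one from $(-{\rm i}\bar\delta)\cdot(-{\rm i}\omega_n)$—exactly offset, so that only the positive term $h_0^2$ survives in the real part. Once this identity is in place, positivity of $\mathcal{R}e\,\mu'(\tau_n)$ for $r\in(0,\check r_1]$ follows from the continuity of $r\mapsto(h_r,\theta_r,\psi_r)$ established in Theorem \ref{uniquesolution}, after possibly shrinking $\check r_1$.
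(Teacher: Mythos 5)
Your proposal is correct and follows essentially the same route as the paper: differentiate the characteristic equation \eqref{Deltapsi}, pair with $\bar\psi_r$ to kill the $\psi'(\tau_n)$ term, divide by $S_n(r)\neq 0$ (Lemma \ref{Sn}), and extract the sign of the real part in the limit $r\to 0$. Your limiting value $r^2h_0^2/\bigl[(1+\omega_n\bar\delta/h_0)^2+\omega_n^2\bigr]$ agrees with the paper's expression $(c_0^2-2c_0)e^{-2c_0}\bar p^{\,2}c_0^4|\Omega|^2/\lim_{r\to0}|S_n(r)|^2$ after using $h_0=\bar\delta\sqrt{c_0^2-2c_0}$ and $\bar\delta=\bar p e^{-c_0}$, and your use of the identity $f'(c_0)\bar p\,e^{-{\rm i}\theta_0}=\bar\delta+{\rm i}h_0$ is a slightly cleaner way to organize the same trigonometric cancellation.
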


\begin{proof}
Differentiating Eq. \eqref{Deltapsi} with respect to $\tau$ at $\tau=\tau_n$, we obtain
\begin{equation}\label{dmu}
\begin{array}{l}
\displaystyle - {\frac{d \mu\left(\tau_{n}\right)}{d \tau}\left[ \psi_r+ r \tau_{n} p(x) f^{\prime}\left(u_{r}\right) \psi_r e^{-{\rm i} \theta_r}\right]} \\
\displaystyle {+ \Delta\left(r, {\rm i} \nu_r, \tau_{n}\right) \frac{d \psi\left(\tau_{n}\right)}{d \tau} - {\rm i} \nu_r r p(x) f^{\prime}\left(u_{r}\right) \psi_r e^{-{\rm i} \theta_r}=0}.
\end{array}
\end{equation}
Note that
\begin{equation}
\left\langle\bar{\psi}_r, \Delta\left(r, {\rm i} \nu_r, \tau_{n}\right) \frac{d\psi\left(\tau_{n}\right)}{d \tau}\right\rangle=\left\langle \Delta\left(r,-{\rm i} \nu_r, \tau_{n}\right) \bar{\psi}_r, \frac{d\psi\left(\tau_{n}\right)}{d \tau}\right\rangle= 0.
\end{equation}
Then, multiplying Eq. \eqref{dmu} by $\psi_r$ and integrating the result over $\Omega$, we have
\begin{equation}
\begin{aligned}
\frac{d \mu\left(\tau_{n}\right)}{d \tau}=& \frac{{\rm i} \nu_{r} r e^{-{\rm i} \theta_r} \int_{\Omega} p(x) f^{\prime}\left(u_{r}\right) \psi_r^{2} d x}{-\int_{\Omega} \psi_r^{2} d x- r \tau_{n} e^{-{\rm i} \theta_r} \int_{\Omega} p(x) f^{\prime}\left(u_{r}\right) \psi_r^{2} d x} \\
=&- \frac{1}{\left|S_{n}(r)\right|^{2}}\left({\rm i} \nu_r r e^{-{\rm i} \theta_r} \int_{\Omega} \bar \psi_{r}^{2} d x \int_{\Omega} p(x) f^{\prime}\left(u_{r}\right) \psi_r^{2} d x\right.\\
&\left.+{\rm i} \nu_r r^{2} \tau_{n}\left[\int_{\Omega} p(x) f^{\prime}\left(u_{r}\right)  \psi_{r}^{2} d x\right]^{2}\right).
\end{aligned}
\end{equation}
It follows from Eq. \eqref{thetah} that
$$h_0\sin \theta_0=-\bar p e^{-c_0} \frac{c_0^2 - 2{c_0}}{1 - {c_0}}.$$
This, combined with the expression of $u_r, \nu_r$, $\psi_r$ and $c_0>2$, yields
$$
\lim _{r \rightarrow 0} \frac{1}{r^{2}} \frac{d \mathcal{R} e\left[\mu\left(\tau_{n}\right)\right]}{d \tau}=\frac{1}{\lim _{r \rightarrow 0}\left|S_{n}(r)\right|^{2}}\left[(c_0^2-2c_0)e^{-2c_0}{\bar p}^2 c_0^4 |\Omega|^2\right]>0.
$$

\end{proof}

From Theorems \ref{sstability}, \ref{solutionvtaupsi}, \ref{simpleeigenvalue} and \ref{Remu}, we obtain the stability of $u_r$ and the associated Hopf bifurcation.
\begin{theorem}\label{distribution of eigenvalues}
Assume that $c_0>0$. Then model \eqref{M2} has a unique positive steady state $u_r$. Moreover, the following two statements hold for $r\in(0,\check r_1)$, where $0<\check r_1\ll 1$.
\begin{enumerate}
\item [$(i)$] If $0<c_0<2$, then $u_r$ is locally asymptotically stable for any $\tau\in[0,\infty)$.
\item [$(i)$] If $c_0>2$, then there exists a sequence $\{\tau_n\}_{n=0}^\infty$ (defined as in Theorem \ref{solutionvtaupsi}) such that
$u_r$ is locally asymptotically stable for any $\tau\in[0,\tau_0)$ and unstable for $\tau>\tau_0$, and model \eqref{M2} occurs Hopf bifurcation at $u_r$ when $\tau=\tau_n$ ($n=0,1,\dots$).
\end{enumerate}
\end{theorem}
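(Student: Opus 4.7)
The plan is to assemble the previous four theorems into the final statement. Existence and uniqueness of $u_r$ are immediate from Proposition \ref{pp1}, so only the spectral claims remain, and I would choose $\check r_1$ as the minimum of the smallness thresholds $r_1$, $\tilde r_1$ appearing in Theorems \ref{sstability}, \ref{uniquesolution}, \ref{simpleeigenvalue} and \ref{Remu} so that all those conclusions hold simultaneously on $(0,\check r_1]$.

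For part $(i)$, I would appeal directly to Theorem \ref{sstability}: under $0<c_0<2$ one has $\sigma(A_\tau(r))\subset\{\mu\in\mathbb{C}:\mathcal{R}e\,\mu<0\}$ for every $\tau\ge 0$, which is precisely the spectral condition for local asymptotic stability of $u_r$ via the standard principle of linearized stability for retarded reaction-diffusion systems (\cite[Chapter 3]{Wu1996Theory}).

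For part $(ii)$ I would argue in three stages. \emph{Stability on $[0,\tau_0)$:} Proposition \ref{pp1} supplies global (hence local) asymptotic stability at $\tau=0$, so $\sigma(A_0(r))$ lies strictly in the open left half-plane; Theorem \ref{solutionvtaupsi} says $A_\tau(r)$ admits a purely imaginary eigenvalue only at $\tau\in\{\tau_n\}_{n\ge 0}$, with $\tau_0$ the smallest. Since eigenvalues of $A_\tau(r)$ depend continuously on $\tau$, no crossing of the imaginary axis is possible on $[0,\tau_0)$, so the spectrum stays strictly to the left and $u_r$ is locally asymptotically stable there. \emph{Instability on $(\tau_0,\infty)$:} the transversality $d\mathcal{R}e[\mu(\tau_n)]/d\tau>0$ from Theorem \ref{Remu} forces the crossing pair of eigenvalues at each $\tau_n$ to move from the left half-plane into the right, so the number of eigenvalues with positive real part is non-decreasing in $\tau$; consequently for every $\tau>\tau_0$ at least one pair lies strictly to the right and $u_r$ is unstable. \emph{Hopf bifurcation at each $\tau_n$:} the standard Hopf theorem for retarded evolution equations in \cite[Chapter 3]{Wu1996Theory} applies because ${\rm i}\nu_r$ is simple (Theorem \ref{simpleeigenvalue}), no other eigenvalue of $A_{\tau_n}(r)$ lies on the imaginary axis (uniqueness clause of Theorem \ref{solutionvtaupsi}), and the transversality condition holds (Theorem \ref{Remu}).

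Essentially no new analytic work is required: the heavy lifting (smallness of $r$, simplicity, transversality, and the local continuation of eigenvalues via the implicit function theorem in Eq. \eqref{Deltapsi}) has already been carried out in the preceding lemmas. The one point I would be vigilant about is the global continuity of the spectrum as $\tau$ varies — in particular that no eigenvalue can appear from or escape to infinity in the right half-plane on $[0,\tau_0)$ — which follows from the compactness of the resolvent of the generator $A_\tau(r)$ from Eq. \eqref{Ataur} on the appropriate phase space. This is the main, and essentially bookkeeping, obstacle; once it is articulated, the conclusion drops out of the four cited theorems in just a few lines.
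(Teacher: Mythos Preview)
Your proposal is correct and matches the paper's approach: the paper gives no separate proof of this theorem, stating only that it follows from Theorems \ref{sstability}, \ref{solutionvtaupsi}, \ref{simpleeigenvalue} and \ref{Remu}. Your write-up simply makes explicit the standard eigenvalue-crossing and Hopf-bifurcation bookkeeping that the paper leaves implicit.
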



\section{The direction of the Hopf bifurcation}
In this section, we analyze the direction of the Hopf bifurcation of Eq. \eqref{M2} by the methods in \cite{Faria2002Stability,Faria2001Normal,Faria2002Smoothness,Hassard1981Theory}. Letting $U(t)=u(\cdot, t)-u_{\lambda}, t=\tau \tilde{t}, \tau=\tau_{n}+\gamma$, and dropping the tilde sign, system \eqref{M2} can be transformed as follows:
\begin{equation}\label{directionM}
\frac{d U(t)}{d t} =\tau_{n}\Delta U(t)+\tau_{n} L_{0}\left(U_{t}\right)+J\left(U_{t}, \gamma\right),
\end{equation}
where $U_{t} \in \mathcal{C}=C([-1,0], Y)$, and
$$
\begin{aligned}
L_{0}\left(U_{t}\right)=&-r \delta(x) U(t)+ r p(x) f^{\prime}\left(u_{r}\right) U(t-1), \\
J\left(U_{t}, \gamma\right)=&\gamma \Delta U(t)+\gamma L_{0}\left(U_{t}\right)+ \left(\gamma+\tau_{n}\right) r p(x) \\
&\times \left[\frac{f^{\prime \prime}\left(u_r\right)}{2} U^{2}(t-1)+\frac{f^{\prime \prime \prime}\left(u_r\right)}{3 !} U^{3}(t-1)+\mathcal{O}\left(U^{4}(t-1)\right)\right].
\end{aligned}
$$
Then $\gamma = 0$ is the Hopf bifurcation value of Eq. \eqref{directionM}.

Define by $\mathcal{A}_{{\tau}_n}$ the infinitesimal generator of the linearized equation
\begin{equation}
\frac{d U(t)}{d t} =\tau_{n}\Delta U(t)+\tau_{n} L_{0}\left(U_{t}\right).
\end{equation}
It follows from \cite[Chapter 3]{Wu1996Theory} that
$$
\begin{aligned}
\mathcal{A}_{\tau_{n}} \Psi=& \dot{\Psi}, \\
\mathscr{D}\left(\mathcal{A}_{\tau_{n}}\right)=&\left\{\Psi \in \mathcal{C}_{\mathbb{C}} \cap \mathcal{C}_{\mathbb{C}}^{1}: \Psi(0) \in X_{\mathbb{C}}, \dot{\Psi}(0)=\tau_{n}\Delta \Psi(0)\right.\\
&\left.-r \tau_{n}\delta(x) \Psi(0)+ r\tau_{n} p(x) f^{\prime}\left(u_{r}\right)\Psi(-1)\right\},
\end{aligned}
$$
where $C_{\mathbb{C}}^{1}=C^{1}\left([-1, 0], Y_{\mathbb{C}}\right),$ and Eq. \eqref{directionM} can be written in the following abstract form
\begin{equation}\label{absfor}
\frac{d U_t}{d t} =\mathcal{A}_{{\tau}_n} U_t+X_{0} J\left(U_{t}, \gamma\right),
\end{equation}
and
$$
X_{0}(\theta)=\left\{\begin{array}{ll}{0,} & {\theta \in[-1,0)}, \\
{I,} & {\theta=0}.\end{array}\right.
$$
Clearly, $\mathcal{A}_{\tau_{n}}$ has only one pair of purely imaginary eigenvalues $\pm {\rm i} \nu_r \tau_n$ which are simple, and the corresponding eigenfunction with respect to ${\rm i} \nu_r \tau_n$(respectively, $- {\rm i} \nu_r \tau_n$) is $\psi_{r} e^{{\rm i} \nu_{r} \tau_n \theta}$ (respectively, $\overline\psi_{r} e^{-{\rm i} \nu_{r} \tau_n \theta}$) for $\theta \in [-1, 0]$, where $\psi_{r}$ is defined as in Theorem \ref{solutionvtaupsi}.

It follows from \cite{Faria2002Stability,SuWeiShi2012} that we introduce the formal duality $\langle\langle \cdot, \cdot \rangle\rangle$ in $\mathcal {C}$ by
\begin{equation}\label{fdual}
\langle\langle\tilde{\Psi}, \Psi\rangle\rangle=\langle\tilde{\Psi}(0), \Psi(0)\rangle+r\tau_{n} \int_{-1}^{0}\left\langle\tilde{\Psi}(s+1), p(x) f^{\prime}\left(u_{r}\right) \Psi(s)\right\rangle d s,
\end{equation}
for $\Psi \in \mathcal{C}_{\mathbb{C}}$ and $\tilde{\Psi} \in \mathcal{C}_{\mathbb{C}}^{*}:=C\left([0,1], Y_{\mathbb{C}}\right)$. Similar to \cite[Chapter 6]{Hale1977Theory} (see also \cite[Lemma 3.1]{Chen2018Hopf}), we can obtain the formal adjoint operator $\mathcal{A}_{{\tau}_n}^{*}$ of $\mathcal{A}_{{\tau}_n}$ with respect to the formal duality \eqref{fdual}. Here we omit the proof.

\begin{lemma}
The formal adjoint operator $\mathcal{A}_{{\tau}_n}^{*}$ of $\mathcal{A}_{{\tau}_n}$ is defined by
$$
\mathcal{A}_{\tau_{n}}^{*} \tilde{\Psi}(s)=-\dot{\tilde{\Psi}}(s), \quad s \in [0,1]
$$
and

$$
\begin{aligned}
\mathscr{D}\left(\mathcal{A}_{\tau_{n}^{*}}\right)=&\left\{\tilde \Psi \in \mathcal{C}^*_{\mathbb{C}} \cap (\mathcal{C}^*_{\mathbb{C}})^{1}: \tilde\Psi(0) \in X_{\mathbb{C}}, \dot{\tilde{\Psi}}(0)=\tau_{n}\Delta {\tilde{\Psi}}(0)\right.\\
&\left.-r \tau_{n}\delta(x) {\tilde{\Psi}}(0)+ r\tau_{n} p(x) f^{\prime}\left(u_{r}\right){\tilde{\Psi}}(1)\right\},
\end{aligned}
$$
where $\left(\mathcal{C}_{\mathbb{C}}^{*}\right)^{1}=C^{1}\left([0,1], Y_{\mathbb{C}}\right) .$ Moreover, $\mathcal{A}_{\tau_{n}}^{*}$ and $\mathcal{A}_{\tau_{n}}$ satisfy
$$
\langle\langle\mathcal{A}_{\tau_{n}}^{*} \tilde{\Psi}, \Psi\rangle\rangle=\langle\langle\tilde{\Psi}, \mathcal{A}_{\tau_{n}} \Psi\rangle\rangle \text { for } \Psi \in \mathscr{D}\left(\mathcal{A}_{\tau_{n}}\right) \text { and } \tilde{\Psi} \in \mathscr{D}\left(\mathcal{A}_{\tau_{n}}^{*}\right).
$$
\end{lemma}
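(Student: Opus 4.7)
The plan is to verify the adjoint identity $\langle\langle \mathcal{A}_{\tau_n}^*\tilde\Psi, \Psi\rangle\rangle = \langle\langle \tilde\Psi, \mathcal{A}_{\tau_n}\Psi\rangle\rangle$ by direct computation: start with $\langle\langle \tilde\Psi, \mathcal{A}_{\tau_n}\Psi\rangle\rangle$, unfold it via the duality \eqref{fdual}, and transfer every derivative onto $\tilde\Psi$ using integration by parts in both the delay variable $s$ and, through Green's identity, the spatial variable $x$. The expression for $\mathcal{A}_{\tau_n}^*$ and the boundary condition defining $\mathscr{D}(\mathcal{A}_{\tau_n}^*)$ will then be read off as the compatibility requirement ensuring that the resulting functional on $\Psi$ can be written via \eqref{fdual} with a continuous element of $\mathcal{C}_{\mathbb{C}}^*$ in the first slot, namely $\mathcal{A}_{\tau_n}^*\tilde\Psi=-\dot{\tilde\Psi}$.

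Concretely, I would proceed in the following order. First, expand
$$
\langle\langle \tilde\Psi, \mathcal{A}_{\tau_n}\Psi\rangle\rangle = \langle \tilde\Psi(0), \dot\Psi(0)\rangle + r\tau_n\int_{-1}^0 \langle \tilde\Psi(s+1), p(x)f'(u_r)\dot\Psi(s)\rangle\,ds,
$$
and substitute the constraint $\dot\Psi(0) = \tau_n\Delta\Psi(0) - r\tau_n\delta(x)\Psi(0) + r\tau_n p(x)f'(u_r)\Psi(-1)$ from $\mathscr{D}(\mathcal{A}_{\tau_n})$. Second, integrate by parts in $s$ in the delay integral; this produces boundary evaluations at $s=0$ and $s=-1$ together with an integral of $\dot{\tilde\Psi}(s+1)$ paired with $\Psi(s)$. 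The $s=-1$ boundary term is precisely $-r\tau_n\langle\tilde\Psi(0),p(x)f'(u_r)\Psi(-1)\rangle$, which cancels exactly the $\Psi(-1)$-dependent summand introduced in the first step. Third, in the piece $\tau_n\langle\tilde\Psi(0),\Delta\Psi(0)\rangle$, apply Green's identity: the Neumann boundary condition built into $X_{\mathbb C}$ (and thus satisfied by both $\Psi(0)$ and $\tilde\Psi(0)$) kills the boundary integrals on $\partial\Omega$, so the Laplacian moves onto $\tilde\Psi(0)$. Fourth, use the reality of $\delta(x)$ and $p(x)f'(u_r)$ to freely slide these multipliers between the two arguments of the Hermitian inner product $\langle\cdot,\cdot\rangle$.

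After these manipulations the right-hand side collapses to
$$
\langle F(0), \Psi(0)\rangle + r\tau_n\int_{-1}^0 \langle F(s+1), p(x)f'(u_r)\Psi(s)\rangle\,ds,
$$
where $F(u) = -\dot{\tilde\Psi}(u)$ for $u \in (0,1]$ and $F(0)$ is a specific linear combination of $\Delta\tilde\Psi(0)$, $\delta(x)\tilde\Psi(0)$ and $p(x)f'(u_r)\tilde\Psi(1)$. Recognizing this expression as $\langle\langle F,\Psi\rangle\rangle$ with $F\equiv \mathcal{A}_{\tau_n}^*\tilde\Psi$ forces $\mathcal{A}_{\tau_n}^*\tilde\Psi(s)=-\dot{\tilde\Psi}(s)$ on $[0,1]$, and continuity of $F$ at $s=0$ then dictates exactly the boundary identity for $\dot{\tilde\Psi}(0)$ that defines $\mathscr{D}(\mathcal{A}_{\tau_n}^*)$; this also yields the pairing identity on the stated domains. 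The only real obstacle is careful bookkeeping of the two simultaneous integrations by parts — one over $s\in[-1,0]$ and one over $\Omega$ — together with the cancellation of the delayed boundary contribution $\Psi(-1)$ between them. Since this is a routine adaptation to the present diffusive setting of the classical formal-adjoint derivation in \cite[Chapter 6]{Hale1977Theory} (see also \cite[Lemma 3.1]{Chen2018Hopf}), no conceptually new idea is required.
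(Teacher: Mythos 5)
The paper itself omits the proof of this lemma, deferring to the classical formal--adjoint derivation in Hale \cite{Hale1977Theory} and to \cite[Lemma 3.1]{Chen2018Hopf}; your proposal is precisely that derivation, and the method is sound: substitute the domain relation for $\dot\Psi(0)$, integrate by parts in $s$ so that the $s=-1$ boundary term cancels the delayed term in $\Psi(-1)$, move $\Delta$ onto $\tilde\Psi(0)$ by Green's identity using the Neumann condition, and use that $\delta(x)$ and $p(x)f'(u_r)$ are real to slide them across the Hermitian inner product. One caveat you should not gloss over: carrying the computation through gives
$\langle\langle\tilde\Psi,\mathcal{A}_{\tau_n}\Psi\rangle\rangle=\langle \tau_n\Delta\tilde\Psi(0)-r\tau_n\delta(x)\tilde\Psi(0)+r\tau_n p(x)f'(u_r)\tilde\Psi(1),\Psi(0)\rangle-r\tau_n\int_{-1}^0\langle\dot{\tilde\Psi}(s+1),p(x)f'(u_r)\Psi(s)\rangle ds$,
so matching this with $\langle\langle-\dot{\tilde\Psi},\Psi\rangle\rangle$ forces the compatibility condition $-\dot{\tilde\Psi}(0)=\tau_n\Delta\tilde\Psi(0)-r\tau_n\delta(x)\tilde\Psi(0)+r\tau_n p(x)f'(u_r)\tilde\Psi(1)$, i.e.\ the \emph{negative} of the identity printed in the lemma. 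The printed domain condition appears to carry a sign typo: with it, $\bar\psi_r e^{{\rm i}\nu_r\tau_n s}$ would not belong to $\mathscr{D}(\mathcal{A}^*_{\tau_n})$, contradicting the eigenfunction assertion made immediately after the lemma (conjugating $\Delta(r,{\rm i}\nu_r,\tau_n)\psi_r=0$ gives $\tau_n\Delta\bar\psi_r-r\tau_n\delta(x)\bar\psi_r+r\tau_n e^{{\rm i}\nu_r\tau_n}p(x)f'(u_r)\bar\psi_r=-{\rm i}\nu_r\tau_n\bar\psi_r$, which is $-\dot{\tilde\Psi}(0)$, not $\dot{\tilde\Psi}(0)$). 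So your claim that continuity at $s=0$ ``dictates exactly the boundary identity that defines $\mathscr{D}(\mathcal{A}^*_{\tau_n})$'' is right in spirit but, against the statement as printed, off by a sign; when you write the proof out, derive the condition rather than copying it, and record the corrected form.
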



Similar to Theorem \ref{distribution of eigenvalues}, the operator $\mathcal{A}_{{\tau}_n}^{*}$ has only one pair of purely imaginary eigenvalues  $\pm {\rm i} \nu_r \tau_n$ which are simple, and the associated eigenfunction with respect to $- {\rm i} \nu_r \tau_n$ (respectively, ${\rm i} \nu_r \tau_n$) is $\overline \psi_{r} e^{{\rm i}\nu_{r} \tau_n s}$ (respectively, $\psi_{r} e^{-{\rm i}\nu_{r} \tau_n s}$) for $s \in [0, 1]$, where $\psi_{r}$ is defined as in Theorem \ref{solutionvtaupsi}. Then the center subspace of Eq. \eqref{absfor} is $P=\operatorname{Span}\{{p(\theta),\overline p(\theta)}\}$, where $p(\theta)= \psi_{r} e^{{\rm i} \nu_{r} \tau_n \theta}$, and the formal adjoint subspace of $P$ is $P^*=\operatorname{Span}\{{q(s),\overline q(s)}\}$, where $q(s)=\overline \psi_{r} e^{{\rm i}\nu_{r} \tau_n s}$. Let $\displaystyle \Phi_{p}=(p(\theta), \overline {p}(\theta)), \Psi_{P}=\frac{1}{\overline{S_{n}}(r)}(q(s), \overline {q}(s))^{T}$, we can check that $\langle\langle{\Psi}_p, \Phi_{p}\rangle\rangle= I$ directly, where $I$ is the identity matrix in $\mathbb{R}^{2 \times 2}$.
Therefore, $\mathcal {C}_{\mathbb{C}}$ can be decomposed as $\mathcal {C}_{\mathbb{C}}=P\oplus Q$, where
$$
Q=\left\{\Psi \in \mathcal{C}_{\mathbb{C}}:\langle\langle\tilde{\Psi}, \Psi\rangle\rangle= 0 \text { for all } \tilde{\Psi} \in P^{*}\right\}.
$$

Let $\gamma = 0$ in Eq. \eqref{absfor}, and one could obtain the center manifold
\begin{equation}
w(z, \overline{z})=w_{20}(\theta) \frac{z^{2}}{2}+w_{11}(\theta) z \overline {z}+w_{02}(\theta) \frac{\overline {z}^{2}}{2}+O\left(|z|^{3}\right)
\end{equation}
with the range in $Q$. Then the flow of Eq. \eqref{directionM} on the center manifold can be written as:
$$
U_{t}=\Phi_{p} \cdot(z(t), \overline {z}(t))^{T}+w(z(t), \overline {z}(t)),
$$
and
\begin{equation}
\dot{z}(t) =\frac{d}{d t}\langle\langle q(s), U_{t}\rangle\rangle
={\rm i} \nu_r \tau_{n} z(t)+g(z,\overline z),
\end{equation}
where
\begin{equation}
\begin{aligned}
g(z, \overline{z})
&=\frac{1}{S_{n}(r)}\left\langle q(0), J\left(\Phi_{p}(z(t), \overline{z}(t))^{T}+w(z(t),\overline {z}(t)), 0\right)\right\rangle\\
&=\sum_{2 \leq i+j \leq 3} \frac{g_{i j}}{i ! j !} z^{i} \overline{z}^{j}+O\left(|z|^{4}\right).
\end{aligned}
\end{equation}
A direct computation implies that
\begin{equation}\label{g}
\begin{aligned}
g_{20}=&\frac{ r \tau_{n}}{S_{n}(r)} e^{-2 {\rm i} \nu_{r} \tau_{n}} \int_{\Omega} p(x) f''(u_r) \psi_{r}^{3} d x, \\
g_{11}=&\frac{ r \tau_{n}}{S_{n}(r)} \int_{\Omega} p(x) f''(u_r) \psi_{r}\left|\psi_{r}\right|^{2} d x, \\
g_{02}=&\frac{ r \tau_{n}}{S_{n}(r)} e^{2 {\rm i} \nu_{r} \tau_{n}}\int_{\Omega} p(x) f''(u_r) \psi_{r} \overline {\psi}_{r}^{2} d x, \\
g_{21}=&\frac{2 r \tau_{n}}{S_{n}(r)}  e^{- {\rm i} \nu_{r} \tau_{n}} \int_{\Omega} p(x) f''(u_r) \psi_{r}^{2} w_{11}(-1) dx \\ &+\frac{r \tau_{n}}{S_{n}(r)} e^{{\rm i} \nu_{r} \tau_{n}} \int_{\Omega} p(x) f''(u_r)\left|\psi_{r}\right|^{2} w_{20}(-1)dx \\
&+\frac{r \tau_{n}}{S_{n}(r)} e^{-{\rm i} \nu_{r} \tau_{n}} \int_{\Omega} p(x) f'''(u_r) \psi_{r}^{2}|\psi_{r}|^2 d x .
\end{aligned}
\end{equation}

We need to calculate $w_{20}(\theta)$ and $w_{11}(\theta)$ to solve $g_{21}$. From \cite{Hassard1981Theory}, we obtain that $w_{20}(\theta)$ and $w_{11}(\theta)$ satisfy the following equalities,
\begin{equation}\label{2 equalities}
\left\{\begin{array}{l}
{\left(2 {\rm i} \nu_{r} \tau_{n}-\mathcal{A}_{\tau_{n}}\right) w_{20}=H_{20}}, \\
{-\mathcal{A}_{\tau_{n}} w_{11}=H_{11}}.
\end{array}\right.
\end{equation}
Note that, for $- 1 \le \theta  < 0$,
\begin{equation}\label{H20H11}
\begin{array}{l}
{H_{20}(\theta)=-\left(g_{20} p(\theta)+\overline{g}_{02} \overline{p}(\theta)\right)}, \\
{H_{11}(\theta)=-\left(g_{11} p(\theta)+\overline{g}_{11} \overline{p}(\theta)\right)},
\end{array}
\end{equation}
and for $\theta=0$,
\begin{equation}\label{the0}
\begin{aligned}
H_{20}(0)=-\left(g_{20} p(0)+\overline{g}_{02} \overline{p}(0)\right)+ r \tau_{n} e^{-2{\rm i} \nu_{r} \tau_{n}} p(x) f''(u_r) \psi_r^{2}, \\
H_{11}(0)=-\left(g_{11} p(0)+\overline{g}_{11} \overline{p}(0)\right)+ r \tau_{n} p(x) f''(u_r) \left|\psi_r\right|^{2}.
\end{aligned}
\end{equation}
Then we see from Eq. \eqref{2 equalities} and \eqref{H20H11} that $w_{20}(\theta)$ and $w_{11}(\theta)$ can be expressed as
\begin{equation}
w_{20}(\theta)=\frac{{\rm i} g_{20}}{\nu_{r} \tau_{n}} p(\theta)+\frac{{\rm i} \overline{g}_{02}}{3 \nu_{r} \tau_{n}} \overline {p}(\theta)+E e^{2 {\rm i} \nu_{r} \tau_{n} \theta},
\end{equation}
and
\begin{equation}
w_{11}(\theta)=-\frac{{\rm i} g_{11}}{\nu_{r} \tau_{n}} p(\theta)+\frac{{\rm i} \overline {g}_{11}}{\nu_{r} \tau_{n}} \overline {p}(\theta)+F.
\end{equation}
From Eqs. \eqref{2 equalities} and \eqref{the0} and the definition of $\mathcal{A}_{\tau_{n}}$, we find that $E$ satisfies
\begin{equation}
\left.\left(2 {\rm i} \nu_{r} \tau_{n}-\mathcal{A}_{\tau_{n}}\right) E e^{2 {\rm i} \nu_{r} \tau_{n} \theta}\right|_{\theta=0}=r \tau_{n} e^{-2{\rm i} \nu_{r} \tau_{n}}  p(x) f''(u_r)  \psi_r^{2},
\end{equation}
or equivalently,
\begin{equation}\label{E}
\Delta\left(r, 2 {\rm i} \nu_{r}, \tau_{n}\right) E=- r e^{-2{\rm i} \nu_{r} \tau_{n}}  p(x) f''(u_r) \psi_r^{2},
\end{equation}
where $\Delta(r, \mu, \tau)$ is defined in Eq. \eqref{Deltamu}. It follows from Theorem \ref{solutionvtaupsi} that $2 {\rm i} \nu_r$ is not the eigenvalue of $\mathcal{A}_{\tau_{n}}$ for $r \in (0, \check r_1]$, where $0<\check r_1\ll1$, and consequently
$$
E=- r e^{-2{\rm i} \nu_{r} \tau_{n}} \Delta\left(r, 2{\rm i} \nu_{r}, \tau_{n}\right)^{-1}\left(p(x) f''(u_r)\psi_r^{2}\right).
$$
Similarly, we see that $F$ satisfies
\begin{equation}\label{F}
F=-r \Delta\left(r, 0, \tau_{n}\right)^{-1}\left( p(x) f''(u_r) \left|\psi_r\right|^{2}\right).
\end{equation}
In the following, functions $E$ and $F$ could be determined.
\begin{lemma}\label{EF}
Assume that $E$ and $F$ satisfy \eqref{E} and \eqref{F}, respectively. Then
\begin{equation}\label{estimateEF}
E=k_r c_0 +\eta _r ,\;\; F =l_r c_0+ \tilde \eta _r.
\end{equation}
Here $c_0$ is defined as in \eqref{c0}, $\eta _r$ and $\tilde \eta _r$ satisfy
$$
\eta _r,\tilde \eta _r\in X_1, \;\; \lim _{r \rightarrow 0}\left\|\eta_r \right\|_{X_{\mathbb{C}}}=0, \;\; \lim _{r \rightarrow 0}\left\|\tilde{\eta}_r\right\|_{X_{\mathbb{C}}}=0,
$$
where $X_1$ is defined as in \eqref{X1Y1},
and the constants $k_r$ and $l_r$ satisfies
\begin{equation}\label{krlr}
\lim_{r\rightarrow 0 } k_r = \frac{-e^{-2{\rm i}\theta_0 }\overline p f''(c_0)c_0}{e^{-2{\rm i}\theta_0 }\overline p f'(c_0)-\overline \delta-2{\rm i} h_0 },\;\;\; \lim_{r\rightarrow 0 } l_r = \frac{-\overline p f''(c_0)c_0}{\overline p f'(c_0)-\overline \delta},
\end{equation}
where $\theta_0$ and $h_0$ are defined as in \eqref{cos0h0}.
\end{lemma}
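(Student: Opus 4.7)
The plan is to decompose $E = k_r c_0 + \eta_r$ and $F = l_r c_0 + \tilde\eta_r$ according to the splitting $X_{\mathbb C} = \mathscr N(\Delta)_{\mathbb C}\oplus(X_1)_{\mathbb C}$ from \eqref{X1}, substitute into \eqref{E} and \eqref{F}, and then project onto the two summands. Since $\Delta c_0=0$ and $\nu_r = r h_r$, substitution into \eqref{E} and division by $r$ yields
\begin{equation*}
\tfrac{1}{r}\Delta\eta_r + \bigl[e^{-2{\rm i}\theta_r}p(x)f'(u_r)-\delta(x)-2{\rm i}h_r\bigr](k_r c_0+\eta_r) + e^{-2{\rm i}\theta_r}p(x)f''(u_r)\psi_r^2 = 0,
\end{equation*}
together with an analogous identity (with $2{\rm i}h_r$ replaced by $0$ and $\psi_r^2$ by $|\psi_r|^2$) for the pair $(l_r,\tilde\eta_r)$ coming from \eqref{F}.

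Next I would integrate this identity over $\Omega$, which annihilates the $\Delta\eta_r$ term by the Neumann condition and produces a scalar relation; using $u_r\to c_0$, $\theta_r\to\theta_0$, $h_r\to h_0$ and $\psi_r\to c_0$ from Proposition~\ref{pp2} and Theorem~\ref{uniquesolution}, this becomes
\begin{equation*}
k_r c_0\bigl(e^{-2{\rm i}\theta_0}\bar p f'(c_0)-\bar\delta-2{\rm i}h_0\bigr) + e^{-2{\rm i}\theta_0}\bar p f''(c_0)\,c_0^2 + o(1) = 0,
\end{equation*}
which is precisely the scalar equation encoding the claimed limit for $k_r$ in \eqref{krlr}. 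Projecting the same identity onto $(Y_1)_{\mathbb C}$ and inverting with the bounded operator $\Delta^{-1}\colon (Y_1)_{\mathbb C}\to (X_1)_{\mathbb C}$ writes $\eta_r$ as $r$ times a bounded affine function of $(k_r,\eta_r)$ plus fixed data; the overall factor $r$ makes this a strict contraction in $\eta_r$ for $r$ small, yielding a unique $\eta_r\in(X_1)_{\mathbb C}$ with $\|\eta_r\|_{X_{\mathbb C}}=O(r)\to 0$, and substituting this back into the scalar equation pins down $\lim_{r\to 0}k_r$.

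The same two-step projection/inversion procedure handles $F$, with the limiting scalar coefficient $\bar p f'(c_0)-\bar\delta$ playing the role of the previous one. The main obstacle is verifying that both limiting denominators are nonzero under the hypothesis $c_0>2$: the identity $\bar p f'(c_0)=(1-c_0)\bar\delta$, which follows from $f(u)=ue^{-u}$ and $c_0=\ln(\bar p/\bar\delta)$, gives $\bar p f'(c_0)-\bar\delta = -c_0\bar\delta\ne 0$ at once; for the denominator in \eqref{krlr} the same identity combined with the explicit formulas \eqref{cos0h0} for $\theta_0$ and $h_0$ shows that its imaginary part is a nonzero multiple of $c_0\sqrt{c_0^2-2c_0}$ when $c_0>2$. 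Without the non-degeneracy of these denominators, $k_r$ and $l_r$ cannot be isolated from the coupled constant-mode/$(Y_1)_{\mathbb C}$ system, so this is the step where the hypothesis $c_0>2$ is genuinely used.
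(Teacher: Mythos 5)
Your proposal is correct and follows essentially the same route as the paper: decompose $E$ and $F$ along $\mathscr N(\Delta)\oplus X_1$, integrate the substituted equation over $\Omega$ to get the scalar relation determining $k_r$ (resp. $l_r$), and couple it with an estimate showing $\|\eta_r\|\to 0$ before passing to the limit. The only differences are cosmetic: the paper controls $\eta_r$ by testing against $\bar\eta_r$ and invoking the spectral gap of Lemma \ref{r2}, whereas you invert $\Delta$ on $(Y_1)_{\mathbb C}$ and exploit the overall factor $r$; and you make explicit the non-vanishing of the limiting denominators $e^{-2{\rm i}\theta_0}\overline p f'(c_0)-\overline\delta-2{\rm i}h_0$ and $\overline p f'(c_0)-\overline\delta=-c_0\overline\delta$, a check the paper uses implicitly.
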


\begin{proof}
We first prove the estimate for $E$.
Substituting $E$ (defined as in Eq. \eqref{estimateEF}) into Eq. \eqref{E}, we see that
\begin{equation}\label{L_reta_r}
\begin{aligned}
\Delta  & \eta_r+r e^{-2 {\rm i} \nu_r \tau_{n}} p(x) f'(u_r) \left(k_r c_0+\eta_r\right)\\
&-r\delta(x)\left(k_r c_0+\eta_r\right) -2 {\rm i} \nu_r \left(k_r c_0+\eta_r\right)\\
=& -r e^{-2 {\rm i} \nu_r \tau_{n}} p(x) f''(u_r) \psi_r^{2}.
\end{aligned}
\end{equation}
Integrating \eqref{L_reta_r} over $\Omega$, one could easily obtain
\begin{equation}\label{L_reta_rintegrating}
\begin{aligned}
k_r & \left( r e^{-2 {\rm i} \nu_r \tau_{n}} c_0 \int_{\Omega} p(x) f'(u_r) dx -r c_0\int_{\Omega} \delta(x) dx -2 {\rm i} \nu_r c_0 |\Omega| \right)\\
=& -r e^{-2 {\rm i} \nu_r \tau_{n}}\int_{\Omega} p(x) f'(u_r) \eta_r dx +2 {\rm i} \nu_r \int_{\Omega} \eta_r dx +r  \int_{\Omega}  \delta(x) \eta_r dx \\
&- r e^{-2{\rm i} \nu_{r} \tau_{n}} \int_{\Omega} p(x) f''(u_r) \psi_r^{2} dx.
\end{aligned}
\end{equation}
Then multiplying Eq. \eqref{L_reta_r} by $\overline \eta_r$, and integrating the result over $\Omega$, we have
\begin{equation}\label{etarLretaur}
\begin{aligned}
\langle\eta_r,& \Delta \eta_r \rangle + r k_r e^{-2 {\rm i} \nu_r \tau_{n}} c_0 \int_{\Omega} p(x) f'(u_r)\overline \eta_r dx-r k_r c_0 \int_{\Omega} \delta(x) \overline \eta_r dx -2 {\rm i} \nu_r k_r c_0 \int_{\Omega} \overline \eta_r dx \\
=& -r e^{-2 {\rm i} \nu_r \tau_{n}}\int_{\Omega} p(x) f'(u_r) |\eta_r|^2 dx +r \int_{\Omega} \delta(x)|\eta_r|^2 dx +2 {\rm i} \nu_r\int_{\Omega} |\eta_r|^2  dx \\
&- r e^{-2{\rm i} \nu_{r} \tau_{n}} \int_{\Omega} p(x) f''(u_r)\overline \eta_r \psi_r^{2} dx.
\end{aligned}
\end{equation}
From the expression of $\nu_r$, $u_r$, $\psi_r$ and $\tau_r$ (see Eqs. \eqref{cos0h0} and \eqref{taunupsi}), we have
\begin{equation}\label{estimate when r0}
\psi_r \rightarrow c_0,\;\; u_r \rightarrow c_0,\;\;\nu_r/r \rightarrow h_0,\;\; \nu_r\tau_{n} \rightarrow\left(\theta_0 +2 n \pi\right) \;\; as\;\; r \rightarrow 0.
\end{equation}
Hence, it follows from \eqref{L_reta_rintegrating} and \eqref{estimate when r0} that there exist constants $\tilde r>0$ and $M_0,M_1>0$ such that for any $r \in (0,\tilde r)$, $|k_r|\le M_0 \|\eta_r \|_{Y_\mathbb{C}}+M_1.$ Then from Eq.\eqref{etarLretaur} and Eq.\eqref{estimate when r0}, we obtain that there exist constants $M_2,M_3>0$ such that for any $r \in (0,\tilde r)$,
$$|r_2|\cdot \|\eta_r \|_{Y_\mathbb{C}}^2 \le r M_2 \|\eta_r \|_{Y_\mathbb{C}}^2+r M_3 \|\eta_r \|_{Y_\mathbb{C}},  $$
where $r_{2}$ (defined as in Lemma \ref{r2}) is the second eigenvalue of $-\Delta,$ and consequently, $\lim _{r \rightarrow 0}\left\|\eta_r \right\|_{Y_{\mathbb{C}}}=0$. This, combined with Eq. \eqref{L_reta_rintegrating}, implies $k_r$ satisfies \eqref{krlr}.
Then we see from Eq. \eqref{L_reta_r} that $\lim _{r \rightarrow 0}\left\|\eta_r \right\|_{X_{\mathbb{C}}}=0$.

Now we consider $F$. Similarly, substituting $F$ (defined as in Eq. \eqref{estimateEF}) into Eq. \eqref{F}, we obtain that
\begin{equation}\label{L1_reta_r}
\begin{aligned}
\Delta  & \tilde\eta_r+r p(x) f'(u_r) \left(l_r c_0+\tilde\eta_r\right)-r\delta(x)\left(l_r c_0+\tilde\eta_r\right) \\
=& -r p(x) f''(u_r) |\psi_r|^{2}.
\end{aligned}
\end{equation}
Then by using the similar arguments as that for $E$, we see that
$$\lim_{r\rightarrow 0 } l_r = \frac{-\overline p f''(c_0)c_0}{\overline p f'(c_0)-\overline \delta}, $$
and $\lim _{r \rightarrow 0}\left\|\eta_r \right\|_{X_{\mathbb{C}}}=0$.
This complete the proof.
\end{proof}

Note from \cite[Chapter 1]{Hassard1981Theory} that the following quantity determine the direction and stability of bifurcating periodic orbits:
\begin{equation}\label{C1}
C_{1}(0)=\frac{\rm i}{2 \nu_r \tau_{n}}\left(g_{11} g_{20}-2\left|g_{11}\right|^{2}-\frac{\left|g_{02}\right|^{2}}{3}\right)+\frac{g_{21}}{2}.
\end{equation}
Then we have the following result, see the Appendix for the proof.
\begin{proposition}\label{rec0}
Assume that $c_0>2$. Then $\lim_{r \rightarrow 0}\mathcal{R}e [C_{1}(0)]<0$.
\end{proposition}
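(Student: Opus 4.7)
My plan is to pass to the limit $r\to 0$ inside $C_1(0)$ term by term, using the asymptotics established in Theorems \ref{uniquesolution}, \ref{solutionvtaupsi} and Lemmas \ref{Sn}, \ref{EF}, and then to verify negativity of the real part of the resulting closed-form expression. The factor $\nu_r\tau_n$ in the first term of \eqref{C1} tends to $\theta_0+2n\pi>0$, so no degeneracy arises and it suffices to compute the limit of each $g_{ij}$ separately.

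For $g_{20}, g_{11}, g_{02}$ the limits follow at once from $\psi_r\to c_0$, $u_r\to c_0$, $\nu_r\tau_n\to\theta_0+2n\pi$, $r\tau_n\to (\theta_0+2n\pi)/h_0$, and $S_n(r)\to S_n^*:=\bigl[1+\tfrac{\theta_0+2n\pi}{h_0}f'(c_0)e^{-{\rm i}\theta_0}\bar p\bigr]c_0^2|\Omega|\neq 0$ from Lemma \ref{Sn}. Setting $A:=(\theta_0+2n\pi)f''(c_0)c_0^3\bar p|\Omega|/(h_0 S_n^*)$, one gets $g_{20}^*=Ae^{-2{\rm i}\theta_0}$, $g_{11}^*=A$, $g_{02}^*=Ae^{2{\rm i}\theta_0}$. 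For $g_{21}$ I invoke Lemma \ref{EF}: writing $E=k_r c_0+\eta_r$ and $F=l_r c_0+\tilde\eta_r$ with $\eta_r,\tilde\eta_r\to 0$ in $X_{\mathbb{C}}$ and $k_r,l_r$ converging to the constants in \eqref{krlr}, the $X_1$-remainders (including the $r z_r$ tail of $\psi_r$) vanish in $L^2$; by Cauchy--Schwarz they contribute zero to $\int_\Omega p(x)w_{11}(-1)\,dx$ and $\int_\Omega p(x)w_{20}(-1)\,dx$ in the limit, so these integrals reduce to $\bar p|\Omega|$ times the constant parts of $w_{11}^*(-1)$ and $w_{20}^*(-1)$, built from $g_{11}^*,g_{20}^*,\bar g_{02}^*,k^* c_0,l^* c_0$ and the phase factors $e^{\pm{\rm i}\theta_0},\,e^{-2{\rm i}\theta_0}$. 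Substituting into the formula for $g_{21}$ in \eqref{g} yields $g_{21}^*$ in closed form.

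Finally, since $|g_{11}|^2$ and $|g_{02}|^2$ are real,
\begin{equation*}
\mathcal{R}e[C_1(0)]=-\frac{\mathcal{I}m[g_{11}g_{20}]}{2\nu_r\tau_n}+\frac{\mathcal{R}e[g_{21}]}{2},
\end{equation*}
whose $r\to 0$ limit is an explicit rational expression in $c_0,\bar p,\bar\delta,\theta_0,h_0,n$. Applying the identities $f'(c_0)=(1-c_0)e^{-c_0}$, $f''(c_0)=(c_0-2)e^{-c_0}$, $f'''(c_0)=(3-c_0)e^{-c_0}$, $\bar p=\bar\delta e^{c_0}$, $h_0=\bar\delta\sqrt{c_0^2-2c_0}$, $\cos\theta_0=1/(1-c_0)$, $\sin\theta_0=-\sqrt{c_0^2-2c_0}/(1-c_0)$, and clearing the common positive denominator $|S_n^*|^2$, the inequality reduces to the sign of a polynomial-in-$c_0$ expression that I expect to collapse using $c_0^2-2c_0>0$ for $c_0>2$. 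The main obstacle is precisely this last algebraic simplification: the cancellations between the $g_{11}g_{20}$ term (through $1/S_n^*$) and the $g_{21}$ term (through $k^*,l^*$ and the $w$-contributions) are delicate and couple the real and imaginary parts of $S_n^*$; grouping contributions by powers of $(c_0-1)$ arising from $\cos\theta_0$ and $f'(c_0)$, and squaring to eliminate $\sqrt{c_0^2-2c_0}$, is the bookkeeping device I would rely on to carry the sign analysis through.
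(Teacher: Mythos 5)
Your setup is sound and coincides with the paper's: pass to the limit in each $g_{ij}$ using $\psi_r\to c_0$, $S_n(r)\to S_n^*\neq 0$, and Lemma \ref{EF}, and use that $|g_{11}|^2$, $|g_{02}|^2$ are real so that only $\tfrac{{\rm i}g_{11}g_{20}}{2\nu_r\tau_n}$ and $\tfrac{g_{21}}{2}$ contribute to $\mathcal{R}e\,C_1(0)$. But the proposal stops exactly where the proposition's mathematical content begins. You write that the inequality ``reduces to the sign of a polynomial-in-$c_0$ expression that I expect to collapse'' and that the delicate cancellations are ``the bookkeeping device I would rely on'' --- that is a plan, not a proof. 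The sign is genuinely nontrivial: in the limit one finds $\mathcal{R}e(g_{11}e^{-{\rm i}\theta_0})<0$ and $\mathcal{I}m(g_{11}e^{-{\rm i}\theta_0})<0$ while $\mathcal{I}m(E/c_0)>0$, so the final expression is a difference of two terms of competing signs and does not collapse by inspection; the paper needs the explicit estimates \eqref{A3}--\eqref{B} to close it.

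You also miss the structural observation that makes the computation tractable: the term $\tfrac{{\rm i}g_{11}g_{20}}{2\nu_r\tau_n}$ in $C_1(0)$ cancels \emph{exactly} against the $w_{20},w_{11}$-contributions inside $\tfrac{g_{21}}{2}$ (in the limit $g_{20}=g_{11}e^{-2{\rm i}\nu_r\tau_n}$ and $g_{02}=g_{11}e^{2{\rm i}\nu_r\tau_n}$, so the $p(\theta)$, $\bar p(\theta)$ parts of $w_{20}(-1)$, $w_{11}(-1)$ reassemble into $-{\rm i}g_{11}g_{20}/(2\nu_r\tau_n)$), leaving only
\begin{equation*}
\lim_{r\to 0}\mathcal{R}e\,C_1(0)=\tfrac12\lim_{r\to 0}\mathcal{R}e\Bigl[g_{11}e^{-{\rm i}\nu_r\tau_n}\Bigl(\tfrac{E}{c_0}+\tfrac{2F}{c_0}+\tfrac{c_0}{c_0-2}-c_0\Bigr)\Bigr].
\end{equation*}
Without this cancellation your expression retains $1/S_n^*$ in a way that couples real and imaginary parts exactly as you fear, and your proposed remedy of ``squaring to eliminate $\sqrt{c_0^2-2c_0}$'' does not preserve the sign of a difference of terms whose individual signs are not yet controlled. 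Finally, the limit depends on $n$ through $\theta_0+2n\pi$; the negativity must be verified uniformly in $n$, which the paper does by showing that the coefficients of both the linear and quadratic occurrences of $\theta_0+2n\pi$ are negative for $c_0>2$. None of this is carried out in the proposal, so the proof is incomplete at its decisive step.
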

Therefore, we see from proposition \ref{rec0} and \cite[Chapter 1]{Hassard1981Theory} that:

\begin{theorem}\label{stabliHopf}
Assume that $c_0>2$. For $r\in(0,\check r_1)$ and $0<\check r_1\ll 1$, let $\tau_{n}(r)$ be the Hopf bifurcation points of Eq. \eqref{M2} defined as in Theorem \ref{solutionvtaupsi}. Then for each $n \in \mathbb{N} \cup\{0\},$ the direction of the Hopf bifurcation at $\tau=\tau_{n}$ is forward, that is, the bifurcating periodic solutions exist for $ \tau>\tau_n$. Moveover, the bifurcating periodic solution from $\tau=\tau_{0}$ is orbitally asymptotically stable.
\end{theorem}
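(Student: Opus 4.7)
The plan is to combine three ingredients already in place: the transversality condition from Theorem \ref{Remu}, the sign of $\mathcal{R}e[C_1(0)]$ from Proposition \ref{rec0}, and the classical Hassard--Kazarinoff--Wan normal form formulas, which are assembled from $g_{20},g_{11},g_{02},g_{21}$ in \eqref{g} and encoded in $C_1(0)$ at \eqref{C1}. According to \cite{Hassard1981Theory}, the direction of the Hopf bifurcation and the Floquet exponent of the bifurcating orbit are controlled by
\[
\mu_2 = -\frac{\mathcal{R}e[C_1(0)]}{\mathcal{R}e\,\mu'(\tau_n)}, \qquad \beta_2 = 2\,\mathcal{R}e[C_1(0)],
\]
so $\mu_2>0$ corresponds to a forward bifurcation and $\beta_2<0$ to orbital asymptotic stability, provided the remaining (non-critical) part of the spectrum of $A_{\tau_n}(r)$ lies in the open left half plane.

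First I would cite Theorem \ref{Remu} to obtain $\mathcal{R}e\,\mu'(\tau_n)>0$ for every $n\ge 0$ and every $r\in(0,\check r_1]$. Then Proposition \ref{rec0} supplies $\mathcal{R}e[C_1(0)]<0$ on the same range of $r$ (after possibly shrinking $\check r_1$ to absorb the limit). Substituting these two sign facts into the formulas above yields $\mu_2>0$ and $\beta_2<0$ simultaneously. The positivity of $\mu_2$ translates directly into the conclusion that the bifurcating periodic solutions exist for $\tau>\tau_n$, which is the forward direction claim for every $n\in\mathbb{N}\cup\{0\}$.

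To upgrade $\beta_2<0$ into genuine orbital asymptotic stability of the periodic orbit emanating from $\tau_0$, I still need that $\pm\mathrm{i}\nu_r$ are the only eigenvalues of $A_{\tau_0}(r)$ on the imaginary axis and that all other eigenvalues have strictly negative real parts. Theorem \ref{distribution of eigenvalues}(ii) asserts precisely that $u_r$ is locally asymptotically stable for all $\tau\in[0,\tau_0)$, so by continuity of the spectrum of $A_\tau(r)$ in $\tau$, no eigenvalue can have reached the open right half plane at $\tau=\tau_0$; combined with Theorem \ref{solutionvtaupsi}, which identifies $\{\pm\mathrm{i}\nu_r\}$ as the only possible purely imaginary eigenvalues and $\tau_0$ as the smallest delay producing them, this rules out any other crossings. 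Hence the Hassard--Kazarinoff--Wan criterion applies at $\tau_0$ and gives orbital asymptotic stability.

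The main obstacle is bookkeeping rather than analysis: one has to verify that the small-$r$ neighborhoods on which Theorem \ref{Remu}, Proposition \ref{rec0} and Theorem \ref{distribution of eigenvalues} all hold have a common nonempty intersection, i.e.\ that a single $\check r_1$ can be chosen so that the sign $\mathcal{R}e[C_1(0)]<0$ (a limit statement as $r\to 0$) is realized together with the transversality inequality and the spectral picture at $\tau_0$. This amounts to intersecting finitely many smallness conditions and introduces no new technical difficulty; once it is done, the theorem follows immediately from the two displayed identities.
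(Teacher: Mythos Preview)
Your proposal is correct and follows exactly the paper's approach: the paper's proof consists of the single sentence ``we see from Proposition \ref{rec0} and \cite[Chapter 1]{Hassard1981Theory}'', and your write-up simply spells out the standard Hassard--Kazarinoff--Wan mechanism (the formulas for $\mu_2$ and $\beta_2$, the transversality from Theorem \ref{Remu}, and the spectral information from Theorems \ref{solutionvtaupsi} and \ref{distribution of eigenvalues}) that this citation is meant to invoke. Your remark about intersecting the finitely many smallness conditions on $r$ is a point the paper leaves implicit.
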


\section{Discussion}
In Sections 2 and 3, we consider the stability and Hopf bifurcation for model \eqref{M2}.
Note that model \eqref{M2} is equivalent to \eqref{M1}, and $\tau=d\hat\tau$ and $r=1/d$, where parameters $d, \hat\tau$ are in model \eqref{M1}, and parameters $r,\tau$ are in model \eqref{M2}. Then we see from Theorems \ref{distribution of eigenvalues} and \ref{stabliHopf} that:
\begin{proposition}\label{equivthm37}
Assume that $d,a>0$ and $\overline p>\overline \delta$, where $\overline p$ and $\overline \delta$ are defined as in \eqref{c0}. Then model \eqref{M1} admits a unique positive steady state $u^d$, and the following statements hold for $d\in(d_1,\infty]$, where $d_1$ is sufficiently large.
\begin{enumerate}
\item [$(i)$] If $1<\ds\f{\overline p}{\overline\delta}<e^2$, then $ u^d$ is locally asymptotically stable for any $\hat\tau\ge0$.
\item [$(ii)$] If $\ds\f{\overline p}{\overline\delta}>e^2$, then there exists a sequence $\{\hat\tau_n\}_{n=0}^\infty$ such that
$u^d$ is locally asymptotically stable for any $\tau\in[0,\hat\tau_0)$ and unstable for $\tau>\hat\tau_0$, and model \eqref{M2} occurs Hopf bifurcation at $u^d$ when $\hat\tau=\hat\tau_n$ ($n=0,1,\dots$). Moveover, for each $n \in \mathbb{N} \cup\{0\}$, the direction of the Hopf bifurcation at $\hat\tau=\hat\tau_{n}$ is forward, that is, the bifurcating periodic solutions exist for $\hat \tau>\hat\tau_n$, and the bifurcating periodic solution from $\hat\tau=\hat\tau_{0}$ is orbitally asymptotically stable.
\end{enumerate}
\end{proposition}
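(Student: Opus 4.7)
The plan is to deduce Proposition \ref{equivthm37} as a direct translation of Theorems \ref{distribution of eigenvalues} and \ref{stabliHopf} via the change of variables $\tilde u = au$, $\tilde t = d t$, $r = 1/d$, $\tau = d\hat\tau$ that was used in Section~1 to pass from \eqref{M1} to \eqref{M2}. Since this change of variables is a diffeomorphism on the state space together with a strictly positive rescaling of time, it preserves existence and uniqueness of positive steady states, local asymptotic stability, the critical bifurcation values (up to the scaling factors above), the direction of Hopf bifurcation, and the orbital stability of bifurcating periodic orbits. The only bookkeeping is to match the standing hypotheses and the indices.

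First, I would check the correspondence of the basic hypotheses. Recall $c_0 = \ln(\bar p/\bar\delta)$, so $c_0 > 0 \Longleftrightarrow \bar p > \bar\delta$, $0 < c_0 < 2 \Longleftrightarrow 1 < \bar p/\bar\delta < e^2$, and $c_0 > 2 \Longleftrightarrow \bar p/\bar\delta > e^2$. Moreover $r \in (0,\check r_1)$ corresponds to $d \in (d_1,\infty)$ with $d_1 := 1/\check r_1$ sufficiently large. Proposition~\ref{pp1} applied to \eqref{M2} then gives a unique positive steady state $u_r$, and pulling back through $\tilde u = au$ yields a unique positive steady state $u^d = u_r/a$ of \eqref{M1}.

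Next, for statement (i), suppose $1 < \bar p/\bar\delta < e^2$, equivalently $0 < c_0 < 2$. By Theorem~\ref{distribution of eigenvalues}(i), $u_r$ is locally asymptotically stable in \eqref{M2} for every $\tau \ge 0$. Because the linearizations at $u^d$ (for \eqref{M1}) and at $u_r$ (for \eqref{M2}) are related by the factor $d > 0$ in their infinitesimal generators, their spectra differ only by the multiplicative factor $d$, which does not alter signs of real parts; hence $u^d$ is locally asymptotically stable in \eqref{M1} for every $\hat\tau \ge 0$, giving (i).

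For statement (ii), suppose $\bar p/\bar\delta > e^2$, equivalently $c_0 > 2$. Theorem~\ref{distribution of eigenvalues}(ii) supplies the sequence $\{\tau_n\}_{n=0}^\infty$ of Hopf bifurcation values for \eqref{M2}, with $u_r$ stable for $\tau \in [0,\tau_0)$ and unstable for $\tau > \tau_0$. Defining $\hat\tau_n := \tau_n/d$ and again using the time rescaling $\tilde t = dt$, the transversality and simplicity conditions used in the Hopf theorem are preserved, so $\{\hat\tau_n\}_{n=0}^\infty$ is precisely the sequence of Hopf bifurcation values for \eqref{M1} at $u^d$, with the corresponding stability/instability intervals in $\hat\tau$. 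Finally, Theorem~\ref{stabliHopf} asserts that at each $\tau_n$ the bifurcation is forward and that the branch emerging from $\tau_0$ consists of orbitally asymptotically stable periodic solutions. Both the direction and orbital stability are coordinate-free properties of the local flow near the bifurcating orbit, so they transfer verbatim to $\hat\tau_n$ and to the orbit bifurcating at $\hat\tau_0$.

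There is essentially no analytic obstacle here; the only thing to be careful about is making sure that the time rescaling $\tilde t = dt$ and the state rescaling $\tilde u = au$ genuinely preserve the notion of orbital asymptotic stability (they do, since they are $C^\infty$ conjugacies) and that the index set $\{\hat\tau_n\}$ coincides with $\{\tau_n/d\}$ without introducing spurious values, which is immediate from the one-to-one correspondence of eigenvalues of the linearized operators under the rescaling. This completes the reduction and yields Proposition~\ref{equivthm37}.
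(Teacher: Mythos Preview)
Your proposal is correct and follows exactly the paper's approach: the paper simply states that Proposition~\ref{equivthm37} follows from Theorems~\ref{distribution of eigenvalues} and~\ref{stabliHopf} via the change of variables $\tilde u=au$, $\tilde t=dt$, $r=1/d$, $\tau=d\hat\tau$, without giving further details. Your write-up just makes explicit the bookkeeping (matching $c_0$ with $\bar p/\bar\delta$, setting $d_1=1/\check r_1$, $\hat\tau_n=\tau_n/d$, and noting that spectra, transversality, bifurcation direction, and orbital stability are preserved under the smooth rescaling), which is entirely in line with what the paper intends.
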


Note that $\tau=d\hat \tau$ and $r=1/d$, where parameters $d, \hat \tau$ are in \eqref{M1}, and parameters $r,\tau$ are in \eqref{M2}. Then we see from Lemma \ref{z0beta0h0theta0}
and Theorem \ref{solutionvtaupsi} that
\begin{equation*}
\lim_{d\to\infty} \hat\tau_0=\lim_{d\to\infty}\displaystyle\frac{\tau_0}{d}=\lim_{r\to0}r\tau_0=\check \tau_0:=\displaystyle\frac{\theta_0}{h_0}.
\end{equation*}
Here $\check \tau_0$ is the first Hopf  bifurcation value for the following model:
\begin{equation}\label{average}
u'=\overline p u(t-\check \tau)e^{-au(t-\check \tau)}-\overline \delta u.
\end{equation}
Therefore, when the diffusion rate tends to infinity,
the first Hopf bifurcation value of model \eqref{M1} tends to that of the \lq\lq average\rq\rq~DDE model \eqref{average}.
Moreover, By using the similar arguments as in the Appendix, we see that
$$\lim_{r \rightarrow 0}\mathcal{R}e [C_{1}(0)]=c_0^2  \mathcal{R}e[\check C_{1}(0)]<0,$$
where $\mathcal{R}e [C_{1}(0)]$ is the quantity which determine the direction of Hopf bifurcation for  model \eqref{M2}, and $\mathcal{R}e[\check C_{1}(0)]$ is that
for model \eqref{average}. Therefore, for model \eqref{average}, the direction of the Hopf bifurcation is also forward, and the bifurcating periodic
solution from the first Hopf bifurcation value is also orbitally asymptotically stable. This result improves the earlier result in \cite{WeiLi}.

Finally, we give some numerical simulations to demonstrate our theoretical results for Eq. \eqref{M1}. We show that when $1<c_0<2$, the solution converges to the unique positive steady state for any $\tau\ge0$, see Fig. \ref{Nichoisonstable12}. For $c_0>2$, we show that large delay $\tau$ could make the positive steady state unstable through Hopf bifurcation, and the solution converges to a positive periodic solution, see Fig. \ref{Nichoisonstablebifurcation}.

\begin{figure}[t]
\begin{center}
\includegraphics[width=0.5\textwidth]{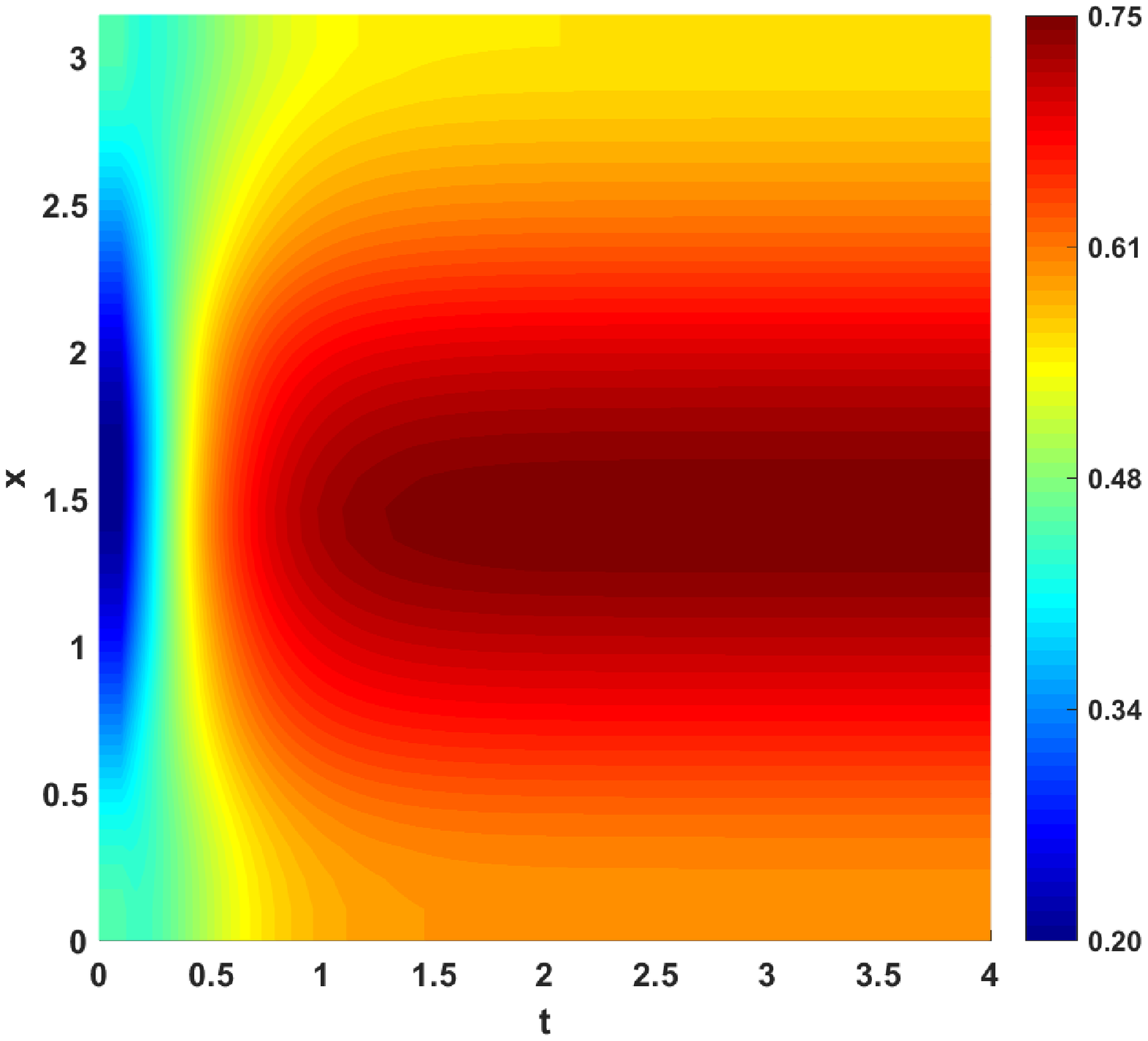}
\includegraphics[width=0.5\textwidth]{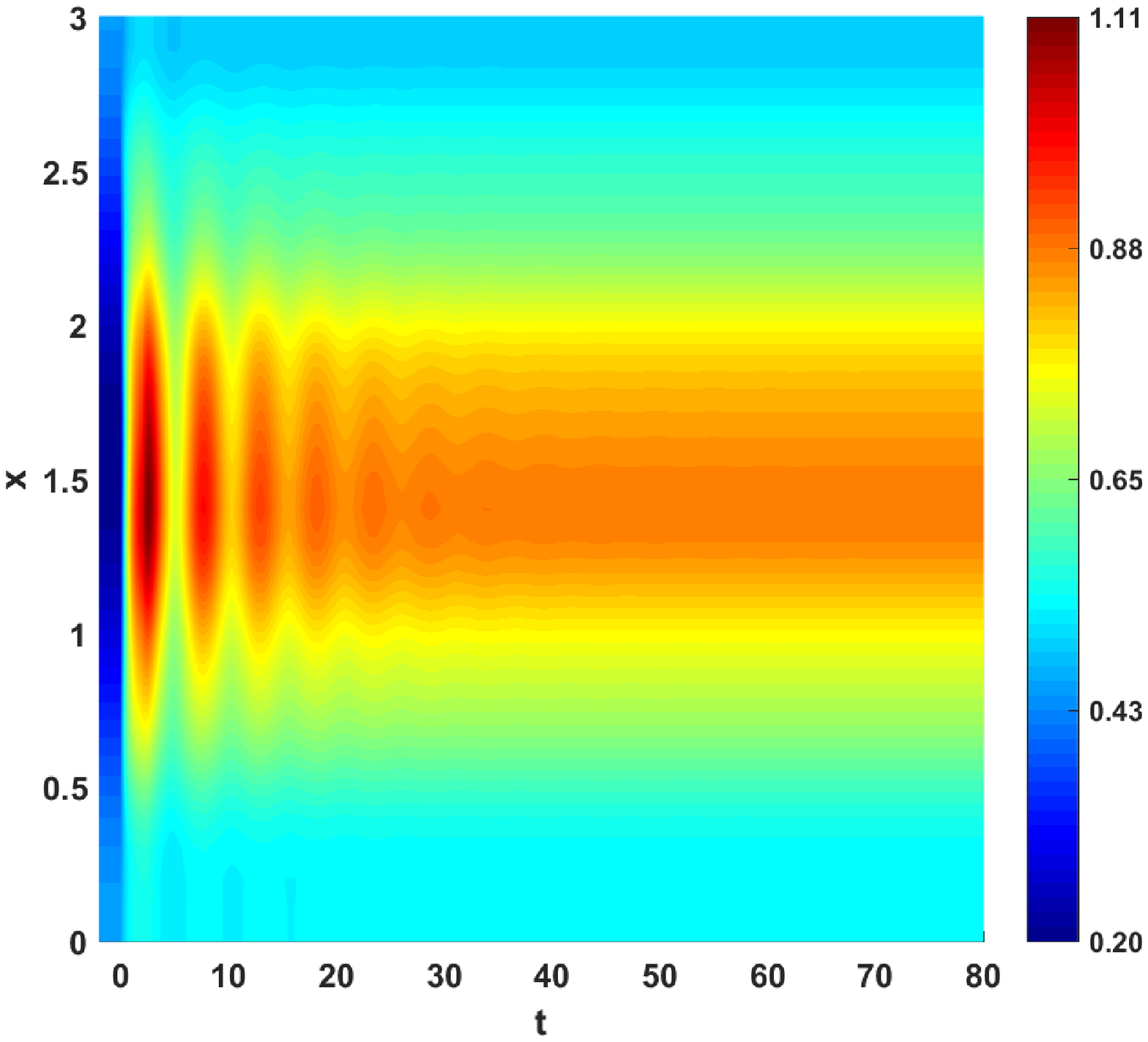}
\caption{The case of $0<c_0<2$. Here $\Omega=(0,3), d=0.1,$ and $a=2.5$, $p(x)=10+\sin x$, $\delta(x)=2+\cos 0.2x$ and $c_0=1.2880$. (Left): $\tau=0$; (Right) $\tau=2$.}
\label{Nichoisonstable12}
\end{center}
\end{figure}
\begin{figure}[t]
\begin{center}
\includegraphics[width=0.5\textwidth]{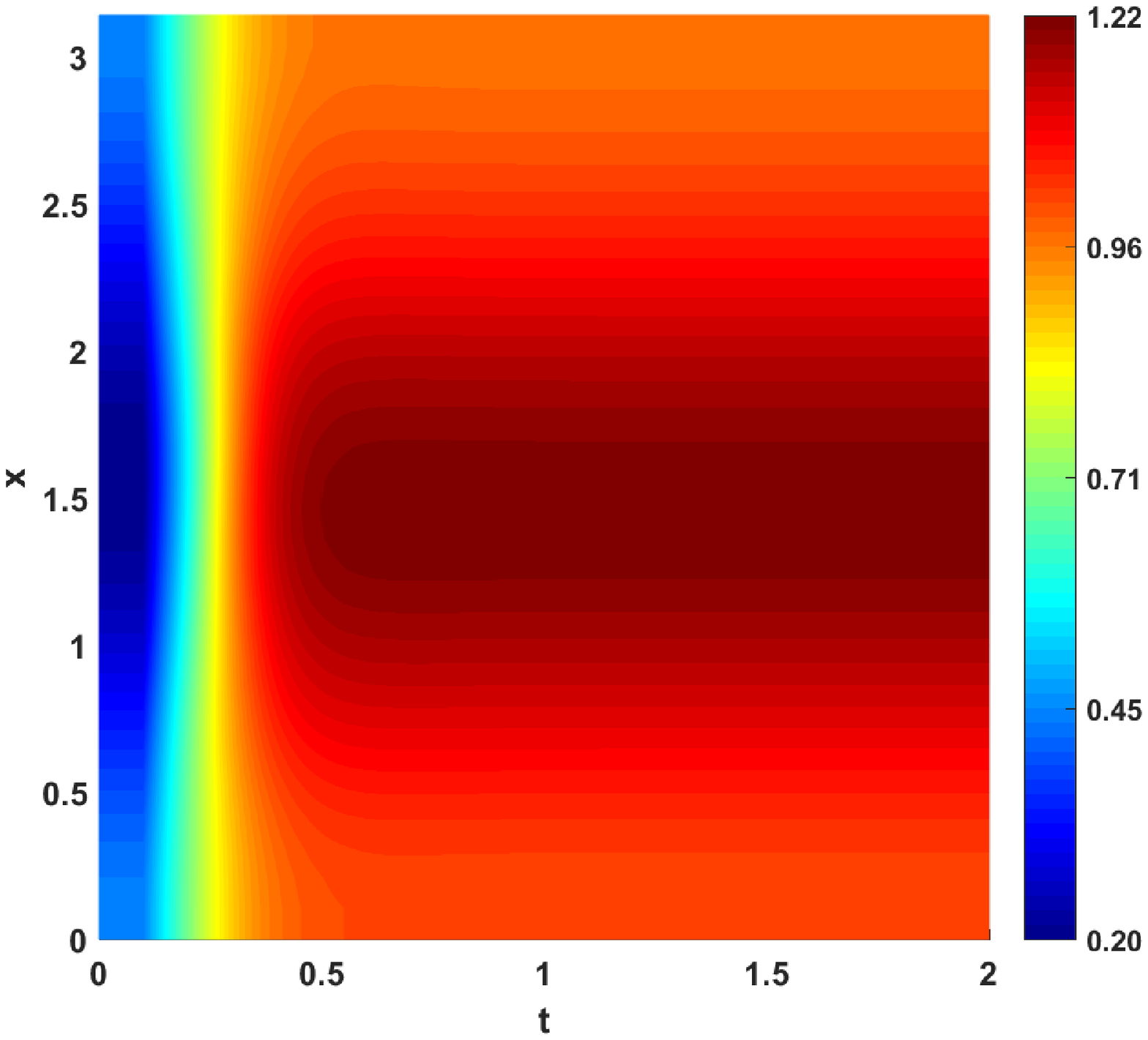}
\includegraphics[width=0.5\textwidth]{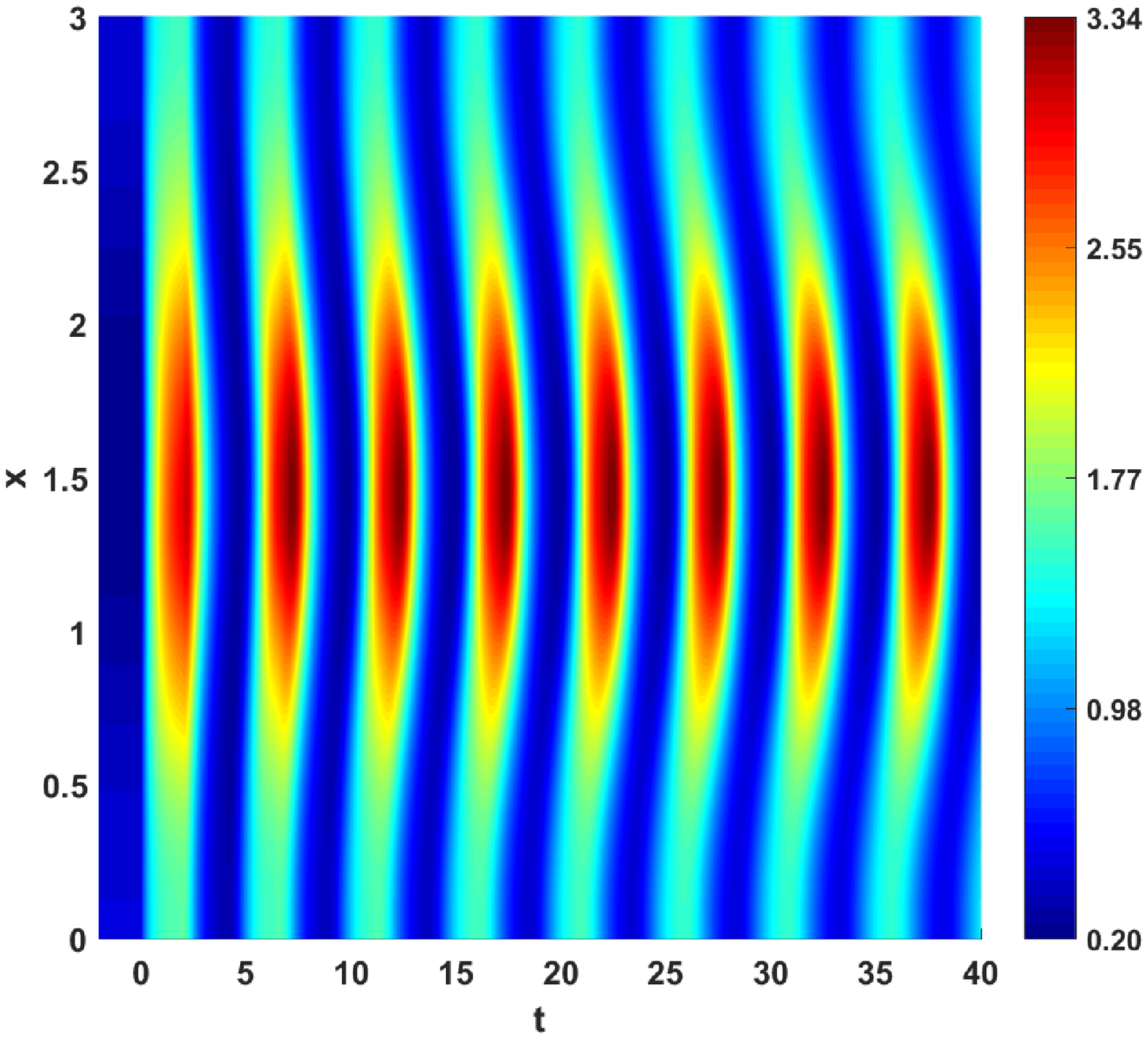}
\caption{The case of $c_0>2$. Here $\Omega=(0,3), d=0.1,$ and $a=2.5$, $p(x)=30+\sin x$, $\delta(x)=2+\cos 0.2x$ and $c_0=2.3443$. (Left) $\tau=0$; (Right) $\tau=2$.}
\label{Nichoisonstablebifurcation}
\end{center}
\end{figure}

\appendix
\section{Appendix}
{\bf The proof of Proposition \ref{rec0}:}
\begin{proof}
It follows from Lemmas \ref{z0beta0h0theta0}, \ref{EF} and Theorem \ref{solutionvtaupsi} that
\begin{equation}\label{A1}
\begin{aligned}
\cos2\theta_0=&\frac{1-c_0^2+2c_0}{(1-c_0)^2},\;\;\; \sin2\theta_0=-\frac{2{\sqrt {c_0^2 - 2{c_0}} }}{(1 - {c_0})^2},\;\;\; h_0= \bar\delta \sqrt{c_0^2 - 2{c_0}},\\
&\lim_{r \rightarrow 0}r \tau_n=\frac{\theta_0+2n\pi}{h_0},\;\;\;\lim_{r \rightarrow 0} \nu_r \tau_n=\theta_0+2n\pi,\\
\lim_{r\rightarrow 0 } E &= \frac{-e^{-2{\rm i}\theta_0 }\overline p f''(c_0)c_0^2}{e^{-2{\rm i}\theta_0 }\overline p f'(c_0)-\overline \delta-2{\rm i} h_0 },\;\;\; \lim_{r\rightarrow 0 } F = \frac{-\overline p f''(c_0)c_0^2}{\overline p f'(c_0)-\overline \delta}.
\end{aligned}
\end{equation}
Since $\lim_{r \rightarrow 0 } \psi(x)=\lim_{r \rightarrow 0 } \overline\psi(x)=c_0$, we see from Eq. \eqref{g} that
\begin{equation}\label{g201102}
\begin{aligned}
\lim_{r \rightarrow 0 } g_{20}=&\lim_{r \rightarrow 0 }\frac{ r \tau_{n}}{S_{n}(r)} e^{-2 {\rm i} \nu_r \tau_n} \overline p |\Omega| f''(c_0) c_0^3=\lim_{r \rightarrow 0 } g_{11} e^{-2 {\rm i} \nu_r \tau_n},\\
\lim_{r \rightarrow 0 } g_{11}=&\lim_{r \rightarrow 0 }\frac{ r \tau_{n}}{S_{n}(r)}\overline p |\Omega| f''(c_0) c_0^3, \\
\lim_{r \rightarrow 0 } g_{02}=&\lim_{r \rightarrow 0 } \frac{ r \tau_{n}}{S_{n}(r)} e^{2 {\rm i} \nu_r \tau_n}\overline p |\Omega| f''(c_0) c_0^3,\\
\end{aligned}
\end{equation}
and
$$
\begin{aligned}
\lim_{r \rightarrow 0 } g_{21}=&\lim_{r \rightarrow 0 } \frac{2 r \tau_{n}}{S_{n}(r)}  e^{- {\rm i} \nu_r \tau_n} \overline p |\Omega| f''(c_0) c_0^3 (-\frac{{\rm i} g_{11}}{\nu_{r} \tau_{n}} e^{- {\rm i} \nu_r \tau_n}+\frac{{\rm i} \overline {g}_{11}}{\nu_{r} \tau_{n}} e^{ {\rm i} \nu_r \tau_n}+\frac{F}{c_0})\\
&+\lim_{r \rightarrow 0 }\frac{r \tau_{n}}{S_{n}(r)} e^{{\rm i}\nu_r \tau_n}  \overline p |\Omega| f''(c_0) c_0^3 (\frac{{\rm i} g_{20}}{\nu_{r} \tau_{n}} e^{- {\rm i} \nu_r \tau_n}+\frac{{\rm i} \overline{g}_{02}}{3 \nu_{r} \tau_{n}} e^{ {\rm i} \nu_r \tau_n}+E e^{-2 {\rm i} \nu_r \tau_n})\\
&+\lim_{r \rightarrow 0 }\frac{r \tau_{n}}{S_{n}(r)} e^{-{\rm i}\nu_r \tau_n}  \overline p |\Omega| f''(c_0) c_0^3  \\
=&\lim_{r \rightarrow 0 }\left\{\frac{2}{\nu_{r} \tau_{n}}\left[-{\rm i}g_{11}g_{20}+{\rm i}|g_{11}|^2\right]+g_{11}e^{-{\rm i}\nu_r \tau_n}\frac{2F}{c_0}\right\}\\
&+\lim_{r \rightarrow 0 }\left[ \frac{{\rm i}g_{11}g_{20}}{\nu_{r} \tau_{n}}+\frac{{\rm i}|g_{02}|^2}{3\nu_{r} \tau_{n}}+g_{11}e^{-{\rm i} \nu_r \tau_n}\frac{E}{c_0}\right]\\
&+\lim_{r \rightarrow 0 }\left[-g_{11}e^{-{\rm i} \nu_r \tau_n}c_0+g_{11}e^{-{\rm i} \nu_r \tau_n}\frac{c_0}{c_0-2}\right].
\end{aligned}
$$
Therefore,
$$
\begin{aligned}
\lim_{r \rightarrow 0 } \mathcal{R}e g_{21}=&\lim_{r \rightarrow 0 } \mathcal{R}e \left[\frac{-{\rm i}g_{11}g_{20}}{\nu_{r} \tau_{n}} +g_{11}e^{-{\rm i}\nu_r \tau_n}\left(\frac{E}{c_0}+\frac{2F}{c_0}+\frac{c_0}{c_0-2}-c_0\right)\right].
\end{aligned}
$$
This, combined with Eq. \eqref{C1}, implies that
\begin{equation}
\begin{aligned}
\lim_{r \rightarrow 0}\mathcal{R}e C_{1}(0)=&\lim_{r \rightarrow 0}\mathcal{R}e \left[\frac{\rm i}{2 \nu_r \tau_{n}}\left(g_{11} g_{20}-2\left|g_{11}\right|^{2}-\frac{\left|g_{02}\right|^{2}}{3}\right)+\frac{g_{21}}{2}\right]\\
=&\lim_{r \rightarrow 0}\mathcal{R}e \left(\frac{{\rm i} g_{11} g_{20}}{2 \nu_r \tau_{n}}+\frac{g_{21}}{2} \right)\\
=&\lim_{r \rightarrow 0}\mathcal{R}e \left[\frac{{\rm i} g_{11} g_{20}}{2 \nu_r \tau_{n}}+ \frac{-{\rm i}g_{11}g_{20}}{2\nu_{r} \tau_{n}} +\frac{1}{2}g_{11}e^{-{\rm i}\nu_r \tau_n}\left(\frac{E}{c_0}+\frac{2F}{c_0}+\frac{c_0}{c_0-2}-c_0\right) \right]\\
=&\frac{1}{2}\lim_{r \rightarrow 0} \mathcal{R}e\left[ g_{11}e^{-{\rm i}\nu_r \tau_n}\left(\frac{E}{c_0}+\frac{2F}{c_0}+\frac{c_0}{c_0-2}-c_0\right)\right]\\
=&\frac{1}{2}\lim_{r \rightarrow 0}\left[\mathcal{R}e (g_{11}e^{-{\rm i} \nu_r \tau_n})(\mathcal{R}e\frac{E}{c_0}+\frac{2F}{c_0}+\frac{c_0}{c_0-2}-c_0)\right.\\
&\left.- \mathcal{I} m  \left(g_{11}e^{-{\rm i} \nu_r \tau_n}\right)\mathcal{I}m\frac{E}{c_0} \right].
\end{aligned}
\end{equation}
In order to analyze the sign of $\lim_{r \rightarrow 0}\mathcal{R}e C_{1}(0)$, we only need to calculate the signs of $\lim_{r \rightarrow 0}\mathcal{R}e (g_{11}e^{-{\rm i} \nu_r \tau_n})$, $\lim_{r \rightarrow 0}(\mathcal{R}e\frac{E}{c_0}+\frac{2F}{c_0}+\frac{c_0}{c_0-2}-c_0)$, $\lim_{r \rightarrow 0} \mathcal{I} m  \left(g_{11}e^{-{\rm i} \nu_r \tau_n}\right)$ and $\lim_{r \rightarrow 0}\mathcal{I}m\frac{E}{c_0} $, respectively.
From \eqref{Sn(r)}, we have
$$\lim_{r \rightarrow 0}S_n(r)=\lim_{r \rightarrow 0} (1+r \tau_n f'(c_0)e^{-{\rm i} \theta_0}\overline p )c_0^2 |\Omega|,$$
then
\begin{equation}\label{Sr}
\begin{aligned}
\lim_{r \rightarrow 0} \frac{1}{S_n(r)}&=\lim_{r \rightarrow 0}\frac{1+r \tau_n f'(c_0)e^{{\rm i} \theta_0}\overline p }{(1+r \tau_n f'(c_0)e^{-{\rm i} \theta_0}\overline p )(1+r \tau_n f'(c_0)e^{{\rm i} \theta_0}\overline p )c_0^2 |\Omega|}\\
&=\lim_{r \rightarrow 0} \frac{1+r \tau_n f'(c_0)\overline p \cos\theta_0+{\rm i} r \tau_n f'(c_0)\overline p\sin\theta_0}{\left[1+2r \tau_n f'(c_0)\overline p \cos\theta_0+(r \tau_n f'(c_0)\overline p)^2\right]c_0^2 |\Omega|}.
\end{aligned}
\end{equation}
It follows from \eqref{g201102} and \eqref{Sr} that
$$\begin{aligned}
\lim_{r \rightarrow 0} g_{11}&=\lim_{r \rightarrow 0 }\frac{ r \tau_{n}}{S_{n}(r)}\overline p |\Omega| f''(c_0) c_0^3 \\
&=\lim_{r \rightarrow 0} \frac{ r \tau_{n}\overline p f''(c_0) c_0\left[1+r \tau_n f'(c_0)\overline p \cos\theta_0+{\rm i} r \tau_n f'(c_0)\overline p\sin\theta_0\right]}{[1+2r \tau_n f'(c_0)\overline p \cos\theta_0+(r \tau_n f'(c_0)\overline p)^2]}.
\end{aligned}$$
Then, together with Eq. \eqref{A1}, yields
\begin{equation}\label{Reg11e}
\begin{aligned}
\lim_{r \rightarrow 0} \mathcal{R}e \left(g_{11}e^{-{\rm i} \theta_0}\right)=&\lim_{r \rightarrow 0} \left\{\mathcal{R}e g_{11} \cos\theta_0+  \mathcal{I} m  g_{11} \sin\theta_0\right\}\\
=&\lim_{r \rightarrow 0} \frac{ r \tau_{n}\overline p f''(c_0) c_0\left(1+r \tau_n f'(c_0)\overline p \cos\theta_0\right)\cos\theta_0}{[1+2r \tau_n f'(c_0)\overline p \cos\theta_0+(r \tau_n f'(c_0)\overline p)^2]}\\
&+\lim_{r \rightarrow 0} \frac{ r \tau_{n}\overline p f''(c_0) c_0\left(r \tau_n f'(c_0)\overline p\sin\theta_0\right)\sin\theta_0}{[1+2r \tau_n f'(c_0)\overline p \cos\theta_0+(r \tau_n f'(c_0)\overline p)^2]}\\
=&\lim_{r \rightarrow 0} \frac{r \tau_{n}\overline p f''(c_0) c_0 \cos\theta_0+ \left(r \tau_{n}\overline p\right)^2 f'(c_0) f''(c_0) c_0}{[1+2r \tau_n f'(c_0)\overline p \cos\theta_0+(r \tau_n f'(c_0)\overline p)^2]}\\
=&\frac{(\theta_0+2n\pi) \sqrt{c_0^2 - 2{c_0}}\frac{1}{1-c_0}+(\theta_0+2n\pi)^2(1-c_0)}{1+2\frac{(\theta_0+2n\pi)}{\sqrt{c_0^2 - 2{c_0}}}+\left[\frac{(\theta_0+2n\pi)(1-c_0)}{\sqrt{c_0^2 - 2{c_0}}}\right]^2},
\end{aligned}
\end{equation}
and
\begin{equation}\label{Img11e}
\begin{aligned}
\lim_{r \rightarrow 0} \mathcal{I}m \left(g_{11}e^{-{\rm i} \theta_0}\right)=&\lim_{r \rightarrow 0}\left\{ -\mathcal{R}e g_{11} \sin\theta_0+  \mathcal{I} m  g_{11} \cos\theta_0\right\}\\
=&-\lim_{r \rightarrow 0} \frac{ r \tau_{n}\overline p f''(c_0) c_0\left(1+r \tau_n f'(c_0)\overline p \cos\theta_0\right)\sin\theta_0}{[1+2r \tau_n f'(c_0)\overline p \cos\theta_0+(r \tau_n f'(c_0)\overline p)^2]}\\
&+\lim_{r \rightarrow 0} \frac{ r \tau_{n}\overline p f''(c_0) c_0\left(r \tau_n f'(c_0)\overline p\sin\theta_0\right)\cos\theta_0}{[1+2r \tau_n f'(c_0)\overline p \cos\theta_0+(r \tau_n f'(c_0)\overline p)^2]}\\
=&\lim_{r \rightarrow 0} \frac{- r \tau_{n}\overline p f''(c_0) c_0\sin\theta_0}{[1+2r \tau_n f'(c_0)\overline p \cos\theta_0+(r \tau_n f'(c_0)\overline p)^2]}\\
=&\frac{(\theta_0+2n\pi)(c_0 - 2)c_0\frac{1}{1-c_0}}{1+2\frac{(\theta_0+2n\pi)}{\sqrt{c_0^2 - 2{c_0}}}+\left[\frac{(\theta_0+2n\pi)(1-c_0)}{\sqrt{c_0^2 - 2{c_0}}}\right]^2}.
\end{aligned}
\end{equation}
Note that
$$
\begin{aligned}
\lim_{r\rightarrow 0 } \frac{E}{c_0}=& \frac{-e^{-2{\rm i}\theta_0 }(c_0-2)c_0}{e^{-2{\rm i}\theta_0 }(1-c_0)-1-2{\rm i} \sqrt {c_0^2 - 2{c_0}}}\\
=&(-\cos2\theta_0+{\rm i}\sin2\theta_0) (c_0-2)c_0\\
&\times\frac{[(1-c_0)\cos2\theta_0-1]+ {\rm i}[(1-c_0)\sin2\theta_0+2\sqrt{c_0^2 - 2{c_0}}]}{((1-c_0)\cos2\theta_0-1)^2+ ((1-c_0)\sin2\theta_0+2\sqrt {c_0^2 - 2{c_0}})^2}.
\end{aligned}
$$
Therefore, we obtain
\begin{equation}\label{ReEc0}
\begin{aligned}
\lim_{r\rightarrow 0 } \mathcal{R}e \frac{E}{c_0}=&(c_0-2)c_0\\
&\times\frac{-(1-c_0)\cos{^2}2\theta_0+\cos2\theta_0 -(1-c_0)\sin{^2}2\theta_0-2 \sqrt{c_0^2 - 2{c_0}}\sin2\theta_0}{((1-c_0)\cos2\theta_0-1)^2+ ((1-c_0)\sin2\theta_0+2\sqrt {c_0^2 - 2{c_0}})^2}\\
=&\frac{\left[(c_0-1)+\frac{1+3c_0^2-6c_0}{(1-c_0)^2}\right](c_0-2)c_0}{((1-c_0)\cos2\theta_0-1)^2+ ((1-c_0)\sin2\theta_0+2\sqrt {c_0^2 - 2{c_0}})^2},
\end{aligned}
\end{equation}
and
\begin{equation}\label{ImEc0}
\begin{aligned}
\lim_{r\rightarrow 0 } \mathcal{I}m \frac{E}{c_0}=&\frac{\left[-\sin2\theta_0-2\sqrt{c_0^2 - 2{c_0}}\cos2\theta_0\right](c_0-2)c_0}{((1-c_0)\cos2\theta_0-1)^2+ ((1-c_0)\sin2\theta_0+2\sqrt {c_0^2 - 2{c_0}})^2}\\
=&\frac{\frac{2\sqrt {c_0^2 - 2{c_0}}}{(1-c_0)^2}(c_0^2-2c_0)^2}{((1-c_0)\cos2\theta_0-1)^2+ ((1-c_0)\sin2\theta_0+2\sqrt {c_0^2 - 2{c_0}})^2}\\
=&\frac{2(c_0^2-2c_0)^{\frac{5}{2}}}{5c_0^4-14c_0^3+9c_0^2}.
\end{aligned}
\end{equation}
From \eqref{A1}, one also have
\begin{equation}\label{2Fc0}
\begin{aligned}
\lim_{r\rightarrow 0 }\frac{2F}{c_0}=&\frac{-2\overline p f''(c_0)c_0}{\overline p f'(c_0)-\overline \delta}=\frac{-2(c_0-2)c_0}{-c_0}=2(c_0-2)>0.
\end{aligned}
\end{equation}
Then we see from \eqref{ReEc0} and \eqref{2Fc0} that
\begin{equation}\label{ReE/c0-c0-c0/c0-2}
\begin{aligned}
\lim_{r \rightarrow 0}&(\mathcal{R}e\frac{E}{c_0}+\frac{2F}{c_0}+ \frac{c_0}{c_0-2}-c_0)\\
&=\frac{\left[(c_0-1)+\frac{1+3c_0^2-6c_0}{(1-c_0)^2}\right](c_0-2)c_0}{((1-c_0)\cos2\theta_0-1)^2+ ((1-c_0)\sin2\theta_0+2\sqrt {c_0^2 - 2{c_0}})^2}+(c_0-4)+\frac{c_0}{c_0-2}\\
&=\frac{(c_0^3-3c_0)(c_0-2)^2c_0+(c_0-4)(5c_0^4-14c_0^3+9c_0^2)(c_0-2)+c_0(5c_0^4-14c_0^3+9c_0^2)}{(5c_0^4-14c_0^3+9c_0^2)(c_0-2)}\\
&=\frac{(c_0^2-3)(c_0-2)^2c_0^2+(5c_0^4-14c_0^3+9c_0^2)(c_0^2-5c_0+8)}{(5c_0^4-14c_0^3+9c_0^2)(c_0-2)}>0.
\end{aligned}
\end{equation}
It follows from \eqref{Reg11e}, \eqref{Img11e}, \eqref{ImEc0} and \eqref{ReE/c0-c0-c0/c0-2} that
\begin{equation}\label{A2}
\begin{aligned}
&\lim_{r \rightarrow 0}\mathcal{R}e C_{1}(0)=\lim_{r \rightarrow 0} \mathcal{R}e\left[ g_{11}e^{-{\rm i}\nu_r \tau_n}\left(\frac{E}{c_0}+\frac{2F}{c_0}+\frac{c_0}{c_0-2}-c_0\right)\right]\\
=&\lim_{r \rightarrow 0}\left[\mathcal{R}e (g_{11}e^{-{\rm i} \nu_r \tau_n})(\mathcal{R}e\frac{E}{c_0}+\frac{2F}{c_0}+\frac{c_0}{c_0-2}-c_0)- \mathcal{I} m  \left(g_{11}e^{-{\rm i} \nu_r \tau_n}\right)\mathcal{I}m\frac{E}{c_0} \right]\\
=&\frac{(\theta_0+2n\pi) \sqrt{c_0^2 - 2{c_0}}\frac{1}{1-c_0}+(\theta_0+2n\pi)^2(1-c_0)}{1+2\frac{(\theta_0+2n\pi)}{\sqrt{c_0^2 - 2{c_0}}}+\left[\frac{(\theta_0+2n\pi)(1-c_0)}{\sqrt{c_0^2 - 2{c_0}}}\right]^2}\\
&\times \frac{(c_0^2-3)(c_0-2)^2c_0^2+(5c_0^4-14c_0^3+9c_0^2)(c_0^2-5c_0+8)}{(5c_0^4-14c_0^3+9c_0^2)(c_0-2)}\\
&-\frac{(\theta_0+2n\pi)(c_0 - 2)c_0\frac{1}{1-c_0}}{1+2\frac{(\theta_0+2n\pi)}{\sqrt{c_0^2 - 2{c_0}}}+\left[\frac{(\theta_0+2n\pi)(1-c_0)}{\sqrt{c_0^2 - 2{c_0}}}\right]^2} \times \frac{2(c_0^2-2c_0)^{\frac{5}{2}}(c_0-2)}{(5c_0^4-14c_0^3+9c_0^2)(c_0-2)}.
\end{aligned}
\end{equation}
For simplicity, we only calculate the numerator of \eqref{A2}:
\begin{equation}\label{A3}
\begin{aligned}
&\left[(\theta_0+2n\pi) \sqrt{c_0^2 - 2{c_0}}\frac{1}{1-c_0}+(\theta_0+2n\pi)^2(1-c_0)\right]\\
&\times \left[(c_0^2-3)(c_0-2)^2c_0^2+(5c_0^4-14c_0^3+9c_0^2)(c_0^2-5c_0+8)\right]\\
&-(\theta_0+2n\pi)(c_0 - 2)c_0\frac{1}{1-c_0}\times 2(c_0^2-2c_0)^{\frac{5}{2}}(c_0-2)\\
=&\frac{2(c_0^2-2c_0)^{\frac{7}{2}}(c_0-2)(\theta_0+2n\pi)}{c_0-1}\\
&-\left[(c_0^2-3)(c_0-2)^2c_0^2+(5c_0^4-14c_0^3+9c_0^2)(c_0^2-5c_0+8)\right]\\
&\times\left[(c_0-1)(\theta_0+2n\pi)^2+\frac{\sqrt{c_0^2 - 2{c_0}}(\theta_0+2n\pi)}{c_0-1}\right]\\
<&\frac{2(c_0^2-2c_0)^{\frac{7}{2}}(c_0-2)(\theta_0+2n\pi)}{c_0-1}\\
&-\left[(c_0^2-3)(c_0-2)^2c_0^2+(5c_0^4-14c_0^3+9c_0^2)(c_0^2-5c_0+8)\right](c_0-1)(\theta_0+2n\pi)\\
&-\left[(c_0^2-3)(c_0-2)^2c_0^2+(5c_0^4-14c_0^3+9c_0^2)(c_0^2-5c_0+8)\right]\frac{\sqrt{c_0^2 - 2{c_0}}(\theta_0+2n\pi)}{c_0-1}\\
=&(\theta_0+2n\pi)\left\{\frac{2(c_0^2-2c_0)^{\frac{7}{2}}(c_0-2)}{c_0-1}\right.\\
&\left.-\left[(c_0^2-3)(c_0-2)^2c_0^2+(5c_0^4-14c_0^3+9c_0^2)(c_0^2-5c_0+8)\right](c_0-1)\right\}\\
&-\left[(c_0^2-3)(c_0-2)^2c_0^2+(5c_0^4-14c_0^3+9c_0^2)(c_0^2-5c_0+8)\right]\frac{\sqrt{c_0^2 - 2{c_0}}(\theta_0+2n\pi)}{c_0-1}.
\end{aligned}
\end{equation}
Let
$$\begin{aligned}
A=&\frac{2(c_0^2-2c_0)^{\frac{7}{2}}(c_0-2)}{c_0-1}\\
&-\left[(c_0^2-3)(c_0-2)^2c_0^2+(5c_0^4-14c_0^3+9c_0^2)(c_0^2-5c_0+8)\right](c_0-1),\\
B=&-\left[(c_0^2-3)(c_0-2)^2c_0^2+(5c_0^4-14c_0^3+9c_0^2)(c_0^2-5c_0+8)\right].
\end{aligned}$$
Then, when $c_0>2$, we have
\begin{equation}\label{A}
\begin{aligned}
A\le &\left\{2c_0^2(c_0-2)(c_0-1)^3(c_0-2)^2\right.\\
&\left.-c_0^2(c_0-1)^2[(c_0^2-3)(c_0-2)^2+(c_0-1)(2c_0-4)(c_0^2-5c_0+8)]\right\}\frac{1}{c_0-1}\\
=&\frac{2c_0^2(c_0-2)(c_0-1)^2(c_0-1)(c_0-2)^2-c_0^2(c_0-1)^2(c_0-2)[3c_0^3-14c_0^2+23c_0-10]}{c_0-1}\\
=&\frac{c_0^2(c_0-2)(c_0-1)^2[-c_0^3+4c_0^2-7c_0+2]}{c_0-1}\\
=&\frac{c_0^2(c_0-2)(c_0-1)^2[-c_0(c_0-2)^2-3c_0+2]}{c_0-1}\\
<&0,
\end{aligned}
\end{equation}
and
\begin{equation}\label{B}
\begin{aligned}
B=-\left\{(c_0^2-3)(c_0-2)^2c_0^2+(c_0-1)(5c_0-9)c_0^2\left[(c_0-\frac{5}{2})^2+\frac{7}{4}\right]\right\}<0.
\end{aligned}
\end{equation}
Summarizing the \eqref{A2}, \eqref{A3}, \eqref{A} and \eqref{B}, we have $\lim_{r \rightarrow 0}\mathcal{R}e C_{1}(0)<0$.
\end{proof}



\end{document}